\documentclass{amsart}
\usepackage{amsthm, amssymb, mathrsfs}
\usepackage{hyperref}
\hypersetup{
    pdftitle={Generalized Calabi-Yau manifolds and the chiral de Rham
    complex},    
    pdfauthor={Reimundo Heluani, Maxim Zabzine},     
    pdfkeywords={Generalized Calabi-Yau, chiral}, 
    colorlinks=true,       
    linkcolor=blue,          
    citecolor=blue,        
    filecolor=blue,      
    urlcolor=blue           
}

\DeclareMathOperator{\End}{End}
\DeclareMathOperator{\lie}{Lie}
\DeclareMathOperator{\Id}{Id}
\DeclareMathOperator{\dive}{div}
\DeclareMathOperator{\tr}{tr}
\DeclareMathOperator{\ad}{ad}
\def\vac{|0\rangle}
\def\cH{{\mathscr H}}
\def\cR{{\mathscr R}}

\def\cO{{\mathscr O}}

\def\fg{{\mathfrak g}}
\def\fh{{\mathfrak h}}

\newtheorem{thm}{Theorem}[section]
\numberwithin{equation}{section}
\newtheorem{prop}[thm]{Proposition}
\newtheorem{lem}[thm]{Lemma}

\theoremstyle{definition}
\newtheorem{defn}[thm]{Definition}
\newtheorem{ex}[thm]{Example}
\theoremstyle{remark}
\newtheorem{rem}[thm]{Remark}

\begin{document}
\title{Generalized Calabi-Yau manifolds and the chiral de Rham complex}
\author{Reimundo Heluani}
\address{813 Evans Hall dept. of Mathematics \\ University of California \\
Berkeley 94720}
\email{heluani@math.berkeley.edu}
\author{Maxim Zabzine}
\address{Department of Physics and Astronomy, Uppsala University\\ Box 803 SE-75108 Uppsala Sweden}
\email{Maxim.Zabzine@fysast.uu.se}
\begin{abstract}
	We show that the chiral de Rham complex of a generalized
	Calabi-Yau manifold carries $N=2$ supersymmetry. We discuss the
	corresponding topological twist for this $N=2$ algebra. We interpret
	this  
	 as an algebroid version of the super-Sugawara or
	Kac-Todorov construction. 
\end{abstract}
\maketitle

\section{Introduction}
In \cite{malikov} the authors introduced a sheaf $\Omega^{\mathrm{ch}}_M$ of super vertex algebras on any
smooth manifold $M$, they called it the \emph{chiral de Rham complex} of $M$. It
was subsequently studied both in the mathematics literature (cf. \cite{gerbes2},
\cite{heluani2}, \cite{frenkelnekrasov} among others) and the physics
literature, in connection to the $\sigma$-model with target $M$ (cf.
\cite{kapustin}, \cite{Witten} among others). In the algebraic context, when the
manifold $M$ has a global holomorphic volume form, it was shown in \cite{malikov}
that the cohomology $H^*(M, \Omega^{\mathrm{ch}}_M)$ of this sheaf, a super vertex
algebra, carries the structure of an $N=2$ superconformal vertex algebra. This
result was further generalized in the differential setting in \cite{heluani2} and
more recently in \cite{heluani8}, where it was shown that, when $M$ is Calabi-Yau,
$\Omega^\mathrm{ch}_M$ carries an $N=2$ superconformal structure associated to the
complex structure, and another $N=2$ structure associated to its symplectic
structure. Moreover, these two structures combine into two commuting $N=2$
superconformal structures on $\Omega^\mathrm{ch}_M$. 

On the other hand, it has been known for some time now in the physics literature,
that for the $\sigma$-model to posses $N=2$ supersymetry, the target manifold ought
to have the structure of a \emph{generalized complex manifold} (cf.
\cite{bredthauer},
\cite{zabzine}, \cite{zabzine2} and references therein). The aim of this article is to
show that, for this supersymmetry to subsist at the quantum level, the
\emph{canonical bundle} of $M$ has to be \emph{holomorphically trivial}. Formally we will show that given
a differentiable manifold $M$, to each pair $(\mathcal{J}, \varphi)$ where
$\mathcal{J}$ is a \emph{generalized complex structure} on $M$ and $\varphi$ is a
global closed pure spinor (a closed section of the canonical line bundle
$U_\mathcal{J}$ corresponding to $J$),
we will associate an $N=2$ superconformal structure on $\Omega^\mathrm{ch}_M$ of
central charge $c = 3 \dim_\mathbb{R} M$. This structure generalizes in the
Calabi-Yau case, those structures described in \cite{heluani2} and \cite{heluani8}.
 
We can perform a topological twist by reassigning the
conformal weights of the basic fermions in this theory. For example, in
the Complex case, the authors of \cite{malikov} declared the conformal
weight of fields corresponding to holomorphic forms to be zero, while the
conformal weight of fields corresponding to holomorphic vector fields is
$1$. We show that the twisting in the generalized complex case is a
generalization of both $A-$ and $B-$models. Indeed, one has to consider linear combinations of
differential forms and vector fields as the basic fermions in the twisted
theory. We identify the BRST cohomology of the chiral de Rham complex of
$M$ with the Lie algebroid cohomology of the corresponding Dirac
structure, obtaining thus another interpretation for the
Gerstenhaber algebra structure in the Lie algebroid cohomology of
a generalized Calabi-Yau manifold.

It is well known that given a simple or commutative
Lie algebra $\fg$ with an invariant bilinear form $(,)$, one can
construct an embedding of the $N=1$ super-vertex algebra in the
corresponding super-affine vertex algebra $V^k(\fg_{\mathrm{super}})$
(cf. \cite{kactodorov},
\cite{kac:vertex}). Taking Zhu algebras, one recovers the construction of
the \emph{cubic Dirac operator} of \cite{kostant2} (cf.
\cite{kacdesole2}). Our construction could be viewed as a groupoid
generalization of this construction. Loosely speaking, Courant algebroids
could be viewed as families of Lie algebras with invariant bilinear forms
$(,)$. Given a courant algebroid $E$, there exists a sheaf of SUSY vertex
algebras $U^\mathrm{ch}(E)$ constructed in a similar way to the sheaf of
twisted differential operators corresponding to a Lie algebroid. Choosing
special local frames for $E$, the superfield of $U^\mathrm{ch}(E)$
that generates supersymmetry is given by the same expression as in the
super-Sugawara or Kac-Todorov construction of \cite{kactodorov}.

The organization of this article is as follows. In section \ref{sec:basics} we
recall the basics of vertex algebra theory and SUSY vertex algebra theory, we refer
the reader to \cite{kac:vertex} for the former and \cite{heluani3} for the latter.
In section \ref{sec:generalized} we collect some results about Lie and
Courant algebroids, we briefly recall the definition of the
\emph{modular class} of a Lie algebroid, as well as we recall the basics of generalized complex
geometry following \cite{gualtieri1}. In
section \ref{sec:sheaf} we recall the construction of a sheaf of vertex algebras
associated to any \emph{Courant algebroid} on $M$. The chiral de Rham complex of
a differentiable manifold $M$ corresponds to the case when this Courant algebroid is
the standard algebroid $T_M \oplus T^*_M$. In this section we follow
\cite{heluani8}, while we refer the reader to the original literature for a
complete treatment \cite{malikov}, \cite{gerbes2}, \cite{bressler1}. In section
\ref{sec:sections} we construct global sections of the chiral de Rham complex of
$M$ associated to any pair $(\mathcal{J}, \varphi)$ as above. We state the main
results in this section (see Theorems \ref{thm:n=2thm} and
\ref{thm:quasi-iso}). The technical proofs and
computations can be found in the Appendix.

{\bf Acknowledgements:} 
We thank Francesco Bonechi, Gil Cavalcanti, Marco Gualtieri, Henrique
Burstyn and Alan Weinstein for stimulating discussions. We are happy to thank
the Kavli institute for theoretical physics and
the program \emph{geometrical aspects of string theory} at Nordita, where part of this work was
carried out. 

R.H. is 
supported by the Miller institute for basic research in science. 
M.Z. is supported by VR-grant 621-2004-3177 and by
VR-grant 621-2008-4273.
\section{Preliminaries on Vertex Algebras} \label{sec:basics}

\subsection{Vertex superalgebras} \label{classical}

In this section, we review the definition of vertex superalgebras,
as presented in \cite{kac:vertex}.
    Given a vector space $V$, an \emph{$\End(V)$-valued field} is a formal
    distribution of the form
    \begin{equation*}
        A(z) = \sum_{n \in \mathbb{Z}} z^{-1-n} A_{(n)},\qquad A_{(n)} \in
        \End(V),
    \end{equation*}
    such that for every $v \in V$, we have $A_{(n)}v = 0$ for large enough $n$.

    \begin{defn}
    A vertex super-algebra consists of the data of a super vector space $V$,
    an even vector $\vac \in V$ (the vacuum vector),
     an even endomorphism $T $, and a parity preserving linear map $A \mapsto Y(A,z)$ from
     $V$ to $\End(V)$-valued fields (the state-field correspondence). This
     data should satisfy the following set of axioms:
    \begin{itemize}
    \item Vacuum axioms:
        \begin{equation*}
            Y(\vac, z) = \Id, \qquad 
            Y(A, z) \vac = A + O(z), \qquad
            T \vac = 0.
        \end{equation*}
    \item Translation invariance:
        \begin{equation*}
            \begin{aligned}
                {[}T, Y(A,z)] &= \partial_z Y(A,z).
        \end{aligned}
        \end{equation*}
    \item Locality:
        \begin{equation*}
            (z-w)^n [Y(A,z), Y(B,w)] = 0, \qquad n \gg 0.
        \end{equation*}
     \end{itemize}
(The notation $O(z)$ denotes a power series in $z$ without constant
term.)
    \label{defn:1.3}
\end{defn}
    Given a vertex super-algebra $V$ and a vector $A \in V$, we expand the fields
    \begin{equation*}
        Y(A,z) = \sum_{{j \in \mathbb{Z}}} z^{-1-j}
        A_{(j)},
    \end{equation*}
    and we call the endomorphisms $A_{(j)}$ the \emph{Fourier modes} of
    $Y(A,z)$. Define now the operations:
    \begin{equation*}
            {[}A_\lambda B] = \sum_{{j \geq 0}}
            \frac{\lambda^{j}}{j!} A_{(j)}B, \qquad
            AB = A \cdot B := A_{(-1)}B.
    \end{equation*}
    The first operation is called the $\lambda$-bracket and the second is
    called the \emph{normally ordered product}.
     The $\lambda$-bracket contains all of the information about the commutators between the Fourier coefficients of fields in $V$. 

\begin{rem}
	Corresponding to a given a super-vertex algebra $V$, there exists
	an associative algebra $Z(V)$ called the \emph{Zhu algebra of V}.
	Below we will give some examples of these algebras and refer the
	reader to the classic literature in the subject  for its
	definition (see for example  \cite{kacdesole2}).
\end{rem}
\subsection{Examples}

In this section we review the standard description of the $N=1,2$
superconformal vertex algebras as well as the current or affine vertex
algebras. We describe the Sugawara and Kac-Todorov construction. In section \ref{sec:2}, the same
algebras will be described in the SUSY vertex algebra formalism.

\begin{ex}{\bf The $N=1$ (Neveu-Schwarz)
    superconformal vertex algebra} \label{N1ex}

    The $N=1$ superconformal vertex algebra $NS_c$ (\cite{kac:vertex}) of central charge $c$ is generated by two fields: $L(z)$, an even field of conformal weight $2$, and $G(z)$, an odd primary field of conformal weight $\frac{3}{2}$, with the $\lambda$-brackets
\begin{equation}
{[L}_\lambda L] = (T + 2\lambda) L + \frac{c \lambda^3}{12},
    \label{eq:1}
\end{equation}
\[
{[L}_\lambda G] = (T+\frac{3}{2} \lambda) G, \qquad
{[G}_\lambda G]  = 2L + \frac{c \lambda^2}{3}. 
\]
$L(z)$ is called the Virasoro field. The Zhu algebra $Z(NS_c)$ is the free
associative superalgebra in one odd generator $\mathbb{C}[\bar{G}]$. 
\end{ex}

\begin{ex}{\bf The $N=2$ superconformal vertex algebra} \label{N2ex}

The $N=2$ superconformal vertex algebra of central charge $c$ is
generated by the Virasoro field $L(z)$ with $\lambda$-bracket
(\ref{eq:1}), an even primary field $J(z)$ of conformal weight $1$,
and two odd primary fields $G^{\pm}(z)$ of conformal weight
$\frac{3}{2}$, with the remaining $\lambda$-brackets \cite{kac:vertex}
\begin{xalignat*}{2}
    {[J}_\lambda G^\pm] &= \pm G^\pm, & [J_\lambda J] &=
    \frac{c}{3}\lambda, \\
    {[G^+}_\lambda G^-] &= L + \frac{1}{2} TJ + \lambda J +
    \frac{c}{6}\lambda^2,
    & {[G^\pm}_\lambda G^\pm] &= 0.
\end{xalignat*}
\end{ex}

\begin{ex}{\bf The Universal affine vertex algebra} \label{excurrent1}
	Let $\fg$ be a simple or commutative Lie algebra with
	non-degenerate invariant bilinear form $(,)$. The universal
	affine vertex algebra $V^k(\fg)$, $k \in \mathbb{C}$ is generated
	by fields $a, b \in \fg$ with the following $\lambda$-bracket:
	\begin{equation*}
		[a_\lambda b] = [a,b] + k \lambda (a,b). 
	\end{equation*}
	Its corresponding Zhu algebra $Z(V^k(\fg))=U(\fg)$, the universal
	enveloping algebra of $\fg$. If $k \neq -h^\vee$,
	choosing dual bases $\{a^i\}$, $\{a_i\}$ for $(,)$  we can write
	the field
	\[ L := \frac{1}{2 (k + h^\vee)} a^i a_i, \]
	where $h^\vee$ is the dual Coxeter number of $\fg$ and we sum
	over repeated indexes. A simple
	computation shows that $L$ satisfies (\ref{eq:1}) and it is a
	\emph{superconformal vector}. Taking Zhu algebras for this
	morphism we find an embedding $C[x] \hookrightarrow U(\fg)$
	of a polynomial algebra in $U(\fg)$ mapping the generator $x$ to
	the Casimir element of $\fg$.
\end{ex}

\begin{ex}{\bf The super-affine vertex algebra} Let $\fg$ be as above,
	We have a super-vertex algebra generated by even fields $a,b \in
	\fg$ and corresponding odd fields $\bar{a}, \bar{b}$ with the
	following $\lambda$-brackets ($k \in \mathbb{C}$):
	\[ \begin{aligned}
		{[a}_\lambda b] &= [a,b] + \lambda (k + h^\vee) (a,b), &
		[\bar{a}_\lambda \bar{b}] &= (k + h^\vee) (a,b), \\
		[a_\lambda \bar{b}] &= [\bar{a}_\lambda b] =
		\overline{[a,b]}.
	\end{aligned} \]
	Let $\{a^i\}$, $\{a_i\}$ be dual bases as above. If $k \neq
	-h^\vee$, introduce the
	following odd field (cf. \cite{kacdesole2}):
	\[  G = \frac{1}{k + h^\vee} \left( a^i \overline{a_i} +
	\frac{1}{3(k+h^\vee)} \overline{[a^i, a^j]} (\overline{a_i}
	 \, \overline{a_j}) \right). \]
	\label{ex:2} Then $G$ generates the super-vertex algebra of
	Example \ref{N1ex}. Taking Zhu algebras we obtain the
	construction of the cubic Dirac operator of \cite{kostant2} (see
	\cite{kacdesole2}).
	\label{ex:super-currents1}
\end{ex}

\subsection{SUSY vertex algebras}\label{sec:2} In this section we collect some
results on SUSY vertex algebras from \cite{heluani3}. 

    Introduce formal variables $Z=(z,\theta)$ and $W =
    (w,\zeta)$, where $\theta, \zeta$ are odd
    anti-commuting variables and $z, w$ are even commuting variables.
    Given an integer $j$ and $J = 0$ or $1$ we put $Z^{j|J} = z^j \theta^J$.

    Let $\cH$ be the superalgebra generated by $\chi, \lambda$ with the relations
    $[\chi, \chi] = - 2 \lambda$, where $\chi$ is
    odd and $\lambda$ is even and central. We will consider another set of
    generators $-S, -T$ for $\cH$ where $S$ is odd, $T$ is central, and $[S, S]
    = 2 T$. Denote $\Lambda = (\lambda, \chi)$,
    $\nabla = (T, S)$, $\Lambda^{j|J} = \lambda^j \chi^J$ and $\nabla^{j|J} =
    T^j S^J$.

    Given a super vector space $V$ and a vector $a \in V$, we will denote by
    $(-1)^a$ its parity.
    Let $U$ be a vector space, a $U$-valued formal distribution is an
    expression of the form
    \begin{equation*}
        \sum_{\stackrel{j \in \mathbb{Z}}{J = 0,1}} Z^{-1-j|1-J} w_{(j|J)}
        \qquad w_{(j|J)} \in U.
    \end{equation*}
    The space of such distributions will be denoted by $U[ [Z, Z^{-1}] ]$. If
    $U$ is a Lie algebra we will say that two such distributions $a(Z), \,
    b(W)$ are
    \emph{local} if
    \begin{equation*}
        (z - w)^n [a(Z), b(W)] = 0 \qquad n \gg 0.
    \end{equation*}
    The space of distributions such that only finitely many negative powers
    of $z$ appear (i.e. $w_{(j|J)} = 0$ for large enough $j$) will be denoted
    $U( (Z ))$. In the case when $U = \End(V)$ for another vector space $V$,
    we will say that a distribution $a(Z)$ is a \emph{field} if $a(Z)v \in V(
    (Z ))$ for all $v \in V$.
    \begin{defn}[\cite{heluani3}]
    An $N_K=1$ SUSY vertex algebra consists of the data of a vector space $V$,
    an even vector $\vac \in V$ (the vacuum vector), an odd endomorphism
    $S$ (whose square is an even endomorphism we denote $T$),
    and a parity preserving linear map $A \mapsto Y(A,Z)$ from
     $V$ to $\End(V)$-valued fields (the state-field correspondence). This
     data should satisfy the following set of axioms:
    \begin{itemize}
    \item Vacuum axioms:
        \begin{equation*}
            Y(\vac, Z) = \Id, \qquad
            Y(A, Z) \vac = A + O(Z), \qquad
            S \vac = 0.
        \end{equation*}
    \item Translation invariance:
        \begin{equation*}
            \begin{aligned}
            {[} S, Y(A,Z)] &= (\partial_\theta - \theta \partial_z)
            Y(A,Z),\\
            {[}T, Y(A,Z)] &= \partial_z Y(A,Z).
        \end{aligned}
        \end{equation*}
    \item Locality:
        \begin{equation*}
            (z-w)^n [Y(A,Z), Y(B,W)] = 0, \qquad n \gg 0.
        \end{equation*}
     \end{itemize}
    \label{defn:2.3}
\end{defn}
\begin{rem}
    Given the vacuum axiom for a SUSY vertex algebra, we will use the state
    field correspondence to identify a vector $A \in V$ with its corresponding
    superfield $Y(A,Z)$.
    \label{rem:nosenosenose}
\end{rem}
    Given a $N_K=1$ SUSY vertex algebra $V$ and a vector $A \in V$, we expand the fields
    \begin{equation}
        Y(A,Z) = \sum_{\stackrel{j \in \mathbb{Z}}{J = 0,1}} Z^{-1-j|1-J}
        A_{(j|J)}, 
	\label{eq:fourier}
    \end{equation}
    and we call the endomorphisms $A_{(j|J)}$ the \emph{Fourier modes} of
    $Y(A,Z)$. Define now the operations:
    \begin{equation}
            {[}A_\Lambda B] = \sum_{\stackrel{j \geq 0}{J = 0,1}}
            \frac{\Lambda^{j|J}}{j!} A_{(j|J)}B, \qquad
            A B = A_{(-1|1)}B.
        \label{eq:2.4.2}
    \end{equation}
    The first operation is called the $\Lambda$-bracket and the second is
    called the \emph{normally ordered product}.
\begin{rem}
    As in the standard setting, given a SUSY VA $V$ and a vector $A \in V$, we
    have:
    \begin{equation*}
        Y(TA, Z) = \partial_z Y(A,Z) = [T, Y(A,Z)]. 
    \end{equation*}
    On the other hand, the action of the derivation $S$ is described
    by:
    \begin{equation*}
        Y(SA,Z) = \left( \partial_\theta + \theta \partial_z \right) Y(A,Z)
        \neq [S, Y(A,Z)].
    \end{equation*}
    \label{rem:caca5}
\end{rem}
The relation with the standard field formalism is as follows.
Suppose that $V$ is a vertex superalgebra  as defined in section
\ref{classical}, together with a homomorphism from the $N=1$
superconformal vertex algebra in example \ref{N1ex}. $V$ therefore
possesses an even vector $\nu$ of conformal weight $2$, and an odd
vector $\tau$ of conformal weight $\frac{3}{2}$, whose associated
fields
\begin{equation*}
    \begin{aligned}
 Y(\nu,z) &= L(z) = \sum_{n \in \mathbb{Z}} L_n z^{-n-2}, \\
Y(\tau,z) &= G(z) = \sum_{n \in 1/2 + \mathbb{Z}} G_n z^{-n -
\frac{3}{2}},
\end{aligned}
\end{equation*}
have the $\lambda$-brackets as in example \ref{N1ex}, and where we
require $G_{-1/2}=S$ and $L_{-1}=T$. We can then endow $V$ with the
structure of an $N_K=1$ SUSY vertex algebra via the state-field
correspondence \cite{kac:vertex}
\[
Y(A,Z) = Y^{c} (A,z) + \theta Y^{c}(G_{-1/2} A, z),
\]
where we have written $Y^{c}$ to emphasize that this is the
usual state-field (rather than state--superfield)
correspondence in the sense of section \ref{classical}.

There exist however $SUSY$ vertex algebras without such a map
from the $N=1$ superconformal vertex algebra.
\begin{defn}
    Let $\cH$ be as before. \emph{An $N_K=1$ SUSY Lie
    conformal  algebra} is a $\cH$-module $\cR$ with an operation
    $[\,_{\Lambda}\,]: \cR \otimes \cR \rightarrow \cH
    \otimes \cR$ of degree
    $1$ satisfying:
    \begin{enumerate}
        \item Sesquilinearity
            \begin{equation*}
                [S a_\Lambda b] =  \chi [a_\Lambda b],
                \qquad [a_\Lambda S b] = -(-1)^{a} \left(S
                + \chi
                \right) [a_\Lambda b].
            \end{equation*}
        \item Skew-Symmetry:
            \begin{equation*}
                [b_\Lambda a] =  (-1)^{a b} [b_{-\Lambda -
                \nabla} a].
            \end{equation*}
            Here the bracket on the right hand side is computed as
            follows: first compute $[b_{\Gamma}a]$, where $\Gamma =
            (\gamma, \eta)$ are generators of $\cH$ super commuting
            with $\Lambda$, then replace $\Gamma$ by $(-\lambda - T,
            -\chi - S)$.
        \item Jacobi identity:
            \begin{equation*}
                [a_\Lambda [b_\Gamma c]] = -(-1)^{a} \left[
                [ a_\Lambda b]_{\Gamma + \Lambda} c \right] +
                (-1)^{(a+1)(b+1)} [b_\Gamma [a_\Lambda c]],
            \end{equation*}
            where the first bracket on the right hand side is computed as in Skew-Symmetry
            and the identity is an identity in $\cH^{\otimes 2} \otimes\cR$.
    \end{enumerate}
    \label{defn:k.conformal.1}
    \end{defn}
    Given an $N_K=1$ SUSY VA, it is canonically an $N_K=1$ SUSY Lie conformal algebra with
    the bracket defined in (\ref{eq:2.4.2}). Moreover, given an $N_K=1$ Lie
    conformal algebra $\cR$, there exists a unique $N_K=1$ SUSY VA called the
    \emph{universal enveloping SUSY vertex algebra of $\cR$} with the property
    that if $W$ is another $N_K=1$ SUSY VA and $\varphi : \cR \rightarrow W$ is a
    morphism of Lie conformal algebras, then $\varphi$ extends uniquely to a
    morphism $\varphi: V \rightarrow W$ of SUSY VAs.
    The operations (\ref{eq:2.4.2}) satisfy:
    \begin{itemize}
        \item Quasi-commutativity:
            \begin{equation*}
                ab - (-1)^{ab} ba = \int_{-\nabla}^0 [a_\Lambda
                b] d\Lambda.
            \end{equation*}
        \item Quasi-associativity
            \begin{equation*}
                (ab)c - a(bc) = \sum_{j \geq 0}
                a_{(-j-2|1)}b_{(j|1)}c + (-1)^{ab} \sum_{j \geq 0}
                b_{(-j-2|1)} a_{(j|1)}c.
            \end{equation*}
        \item Quasi-Leibniz (non-commutative Wick formula)
            \begin{equation*}
                [a_\Lambda bc ] = [a_\Lambda b] c + (-1)^{(a+1)b}b
                [a_\Lambda c] + \int_0^\Lambda [ [a_\Lambda
                b]_\Gamma c] d \Gamma,
            \end{equation*}
    \end{itemize}
    where the integral $\int d\Lambda$ is $\partial_\chi \int d\lambda$. In
    addition, the vacuum vector is a unit for the normally ordered product
    and the endomorphisms $S, T$ are odd and even derivations respectively of
    both operations.
\subsection{Examples}
\begin{ex}
    Let $\cR$ be the free $\cH$-module generated by an odd vector $H$.
    Consider the following Lie conformal algebra structure in $\cR$:
    \begin{equation*}
        {[}H_\Lambda H] = (2T + \chi S + 3 \lambda) H.
    \end{equation*}
    This is the \emph{Neveu-Schwarz} algebra (of central charge 0). This
    algebra admits a central extension of the form:
    \begin{equation*}
        {[}H_\Lambda H] = (2T + \chi S + 3\lambda) H + \frac{c}{3} \chi
	\lambda^2,
    \end{equation*}
    where $c$ is any complex number. The associated universal enveloping
    SUSY VA is the \emph{Neveu-Schwarz} algebra of central charge
    $c$. If we
    decompose the corresponding field
    \begin{equation*}
        H(z,\theta) = G(z) + 2 \theta L(z),
    \end{equation*}
    then the fields $G(z)$ and $L(z)$ satisfy the commutation relations of
    the $N=1$ super vertex algebra of Example \ref{N1ex}.
    \label{ex:2.9}
\end{ex}
\begin{ex} \label{ex:2.11.a}
    The $N=2$ superconformal vertex algebra is generated by $4$
    fields \cite{kac:vertex}. In this context it is generated by two
    superfields -- an $N=1$ vector $H$ as in Example
    \ref{ex:2.9} and an even current $J$, primary of conformal weight $1$,
    that is:
    \begin{equation*}
        {[}H_\Lambda J] = (2T + 2\lambda + \chi S) J.
    \end{equation*}
    The remaining commutation relation is
    \begin{equation*}
        [J_\Lambda J] = - (H + \frac{c}{3} \lambda \chi).
    \end{equation*}
    Note that given
    the \emph{current} $J$ we can recover the $N=1$ vector $H$.
     In terms of the fields of Example \ref{N2ex}, $H, J$ decompose as

\begin{equation}
    \begin{aligned}
         J(z,\theta) &= - \sqrt{-1}J(z) - \sqrt{-1}\theta \left( G^-(z) - G^+(z)
        \right), \\
        H(z,\theta) &= \left( G^+(z) + G^-(z) \right) + 2 \theta L(z).
    \end{aligned}
    \label{eq:decomposeas}
\end{equation}

\end{ex}

\begin{ex}[{Super currents \cite[Thm. 5.9]{kac:vertex}, \cite[Ex.
	5.9]{heluani3}}] Let $\fg$ be a finite dimensional Lie algebra
	with non-degenerate invariant form $(,)$. We construct an $N_K=1$
	SUSY vertex algebra generated by odd superfields:
	\begin{equation}
		{[a}_\Lambda b] = [a, b] + \chi (k+h^\vee) (a, b), \qquad a,b \in
		 \Pi \fg, \quad k \in \mathbb{C}.
		\label{eq:super-currents}
	\end{equation}
	Recall \cite{kactodorov} (see also
\cite[Ex. 5.9]{heluani3}, \cite{kazama} in the superfield formalism) that
when $k \neq - h^\vee$, the
	superfield
	\begin{equation}
		H_0 = \frac{1}{k+h^\vee} \left( (Sa^i) a_i + \frac{1}{3
		(k+h^\vee)} a^i (a^j
		[a_i,a_j]) \right), 
	\label{eq:kt-g}
	\end{equation}
	where $\{a_i\}$ and $\{a^i\}$ are dual bases of $\fg$ with
	respect to $(,)$, generates an $N=1$ SUSY vertex algebra of central charge 
	\[ c_0 = \frac{k  \dim \fg }{k+h^\vee} + \frac{\dim \fg}{2}, \] as in
	Example \ref{ex:2.9}. Here $h^\vee$ is the dual Coxeter number of
	$\fg$. Moreover, for each $a \in \fg$, the corresponding
	superfield is primary of conformal weight $\frac{1}{2}$, namely:
	\[  {[H_0}_\Lambda a ] = (2T + \lambda + \chi S) a.\]
	 Given any $v \in \fg$ we can deform the field
	$H_0$ above as 
	\begin{equation}
		H = H_0 + T v, 
		\label{eq:hdeformed}
	 \end{equation}
	 and it is straightforward to
	show that this field generates the
	Neveu Schwarz algebra of central charge \[c = c_0 - 3 (k+h^\vee)
	(v, v).\]With respect to this superconformal vector, the fields
	$a \in \fg$ are no longer primary.
		\label{ex:super-currents}
\end{ex}
\subsection{Manin triples and $N=2$ structures} In this
	section we extend the $N=1$ structure of Example
	\ref{ex:super-currents} to
	an $N=2$ structure when $g = \fh \oplus
	\fh^*$ is the double of a Lie bialgebra. The construction here
	presented is a particular case of the construction of E. Getzler
	\cite{getzler1}. We include here the proofs since we will need an
	algebroid version of this below.
	Let $\{e_i\}$ be a basis for $\fh$
	and let $\{e^i\}$ be the dual basis for $\fh^*$. We let \[J :=
	\frac{i}{k+h^\vee} e^i e_i, \] be the even superfield of
	$V^k(\fg_{\mathrm{super}})$ corresponding to the standard
	R-matrix. Recall that $2h^\vee$ is the eigenvalue of the Casimir
	of $\fg$ in its adjoint representation. Note that the element $v'_\fh := [e^i, e_i] \in \fg $
	does not depend on the choice of basis. Decomposing $v'_\fh = w +
	w^*$, where $w \in \fh$ and $w^* \in \fh^*$, we define the
	element \[v_\fh = w - w^*,\] and with a simple computation we
	find 
	\begin{equation}
		(v_\fh, v_\fh) = - \frac{2}{3} h^\vee \dim \fh.
		\label{eq:normv}
	\end{equation}
	\begin{prop} \hfill 
		\begin{enumerate}
			\item $J$ satisfies $[J_\Lambda J] = - (H +
				\tfrac{c \lambda \chi}{3} )$ where $H$ is
				the odd superfield given by
	\begin{multline}
	H :=  \frac{1}{(k+h^\vee)^2}  \Bigl[ e^i \bigl( e^j[e_i,e_j] \bigr) +
	e_i \bigl( e_j [e^i, e^j] \bigr) \Bigr] 
	 +\\ \frac{1}{(k + h^\vee)} \left( e_j (Se^j)
	+ e^j Se_j \right) + \frac{1}{k+h^\vee} T v_\fh.
		\label{eq:hdefsind}
	\end{multline}
	and $c = 3 \dim \fh$.
\item The superfields $J$ and $H$ generate an $N=2$ vertex algebra of
	central charge $c=3 \dim \fh$  as in
	Example \ref{ex:2.11.a}.
		\end{enumerate}
		\label{prop:doubles}
	\end{prop}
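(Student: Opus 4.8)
The plan is to compute the $\Lambda$-bracket $[J_\Lambda J]$ directly from the super-current relations \eqref{eq:super-currents} together with the Wick-type calculus recalled above (quasi-commutativity, quasi-associativity and the non-commutative Wick formula), and then to recognize the resulting odd superfield as the one displayed in \eqref{eq:hdefsind}. Throughout write $k':=k+h^\vee$ and use that $\{e_i\}\cup\{e^i\}$ is a pair of dual bases of $\fg=\fh\oplus\fh^*$ for the pairing $(,)$, so that $(e_i,e^j)=\delta_i^j$ and $(e_i,e_j)=(e^i,e^j)=0$. I would first reduce $[J_\Lambda J]=-\tfrac1{(k')^2}[(e^ie_i)_\Lambda(e^je_j)]$ by applying the non-commutative Wick formula to the second normally ordered product, and then skew-symmetry/quasi-associativity to move the $\Lambda$-bracket across the first one, so that every term is expressed through elementary brackets $[e^\bullet{}_\Lambda e_\bullet]=[e^\bullet,e_\bullet]+\chi k'(e^\bullet,e_\bullet)$ of the generators and through $S$ acting on generators (via sesquilinearity, each leftover $\chi$ produces an $Se^\bullet$).

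Grouping the output of that expansion, I expect four families of terms: (i) \emph{cubic} terms, built from brackets $[e^\bullet,e^\bullet]$ and $[e_\bullet,e_\bullet]$, which should assemble — after repeated use of invariance of $(,)$ — into $\tfrac1{(k')^2}\bigl[e^i(e^j[e_i,e_j])+e_i(e_j[e^i,e^j])\bigr]$; (ii) terms linear in $S$, coming from a single contraction, giving $\tfrac1{k'}\bigl(e_j(Se^j)+e^jSe_j\bigr)$; (iii) an anomalous ``tadpole'' term $\propto T\bigl(\sum_i[e^i,e_i]\bigr)=T v'_\fh$, arising from the double contraction, which after using the decomposition $v'_\fh=w+w^*$ I expect to reorganize into $\tfrac1{k'}Tv_\fh$; and (iv) a scalar term proportional to $\lambda\chi$ whose coefficient is fixed by $\sum_i(e_i,e^i)=\dim\fh$ together with the trace identities for the adjoint action — equivalently by \eqref{eq:normv} — giving exactly $-\tfrac{c}{3}\lambda\chi$ with $c=3\dim\fh$. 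Matching (i)--(iv) against \eqref{eq:hdefsind} proves part (1).

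For part (2) I would first observe that, after reordering normally ordered products by quasi-commutativity, the superfield $H$ of \eqref{eq:hdefsind} is precisely the Kac--Todorov superfield $H_0$ of Example \ref{ex:super-currents} for the Lie algebra $\fg=\fh\oplus\fh^*$ with the dual bases $\{e_i\}\cup\{e^i\}$, deformed as in \eqref{eq:hdeformed} by $v:=\tfrac1{k'}v_\fh$. By Example \ref{ex:super-currents}, $H$ then automatically generates a Neveu--Schwarz algebra as in Example \ref{ex:2.9}, of central charge $c_0-\tfrac{3}{k'}(v_\fh,v_\fh)$; substituting $c_0=\tfrac{2k\dim\fh}{k'}+\dim\fh$ (using $\dim\fg=2\dim\fh$ and $h^\vee(\fg)=h^\vee$) and \eqref{eq:normv} yields $c=3\dim\fh$, matching part (1). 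It then remains to verify $[H_\Lambda J]=(2T+2\lambda+\chi S)J$, i.e.\ that the particular combination $J=\tfrac{i}{k'}e^ie_i$ is primary of conformal weight $1$ for the deformed vector $H$ (the individual generators are not). I would obtain this either by a direct $\Lambda$-bracket computation as in Step 1, or more economically by feeding $[J_\Lambda J]=-(H+\tfrac{c}{3}\lambda\chi)$ into the Jacobi identity of Definition \ref{defn:k.conformal.1} for the triple $(J,J,J)$: since $J$ is even this expresses $[J_\Lambda H]$ through brackets of the form $[H_{\Gamma+\Lambda}J]$, and combined with skew-symmetry and the explicit $H$ this pins $[H_\Lambda J]$ down to the primary form. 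Together with the Neveu--Schwarz relation this is exactly the data of Example \ref{ex:2.11.a}, so $J$ and $H$ generate an $N=2$ vertex algebra of central charge $c=3\dim\fh$.

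The main obstacle is Step 1: the non-commutative Wick formula produces a large number of terms, and collapsing the cubic ones into the symmetric expression in \eqref{eq:hdefsind} requires repeated use of invariance of $(,)$ \emph{together with the Lie bialgebra compatibility (co-Jacobi) axiom} for $\fh$ — i.e.\ the relation among the mixed structure constants of $\fg$ responsible for $\fg$ being a Manin triple. Getting the coefficient of the anomalous $Tv_\fh$ term right, and the coefficient of the central $\lambda\chi$ term, also demands careful sign bookkeeping in the odd (super) setting and the trace identities encoded in \eqref{eq:normv}. (This direct computation is also the reason we avoid the slickest shortcuts: it is the version that will adapt to the Courant-algebroid setting, where the ``$R$-matrix'' $J$ is built from non-constant local frames.)
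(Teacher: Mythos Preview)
Your overall plan coincides with the paper's: compute $[e^ie_i{}_\Lambda e^je_j]$ by the non-commutative Wick formula to prove (1), then for (2) recognise $H$ as the Kac--Todorov field $H_0$ of Example~\ref{ex:super-currents} deformed by $v=\tfrac{1}{k'}v_\fh$, read off the central charge via \eqref{eq:normv}, and finally verify that $J$ is primary of conformal weight $1$. Two points deserve correction.

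\textbf{The tadpole term.} Your item (iii) is not quite right: the double contraction alone produces $(k')T[e^j,e_j]=(k')Tv'_\fh$, and $v'_\fh$ does \emph{not} ``reorganise'' into $v_\fh$ --- they differ by $2w^*$. What actually happens in the paper's computation is that a quasi-associativity correction, coming from moving the cubic piece $(c^j_{ik}e^ie^k)e_j$ into the form $e^i(e^k[e_i,e_k])$, contributes an additional $2(k')c^j_{ij}Te^i=-2(k')Tw^*$. The sum $(k')T(v'_\fh-2w^*)=(k')Tv_\fh$ is what matches \eqref{eq:hdefsind}. So the sign flip $w+w^*\mapsto w-w^*$ is not a bookkeeping reorganisation but a genuine quantum correction from normal-ordering; you should expect it and track it when you carry out the computation.

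\textbf{The Jacobi shortcut for primality of $J$.} Your proposed ``economical'' route via Jacobi on the triple $(J,J,J)$ does not work. In any SUSY vertex algebra the Jacobi identity is a theorem, so once you know $[J_\Lambda J]=-(H+\tfrac{c}{3}\lambda\chi)$ the identity
\[
-[J_\Lambda H]=[H_{\Lambda+\Gamma}J]+[J_\Gamma H]
\]
is automatically satisfied by whatever $[H_\Lambda J]$ happens to be; combined with skew-symmetry it is a tautology and cannot pin down $[H_\Lambda J]$. (Contrast this with the paper's Lemma~\ref{lem:technical}, which goes in the opposite direction: from $[J_\Lambda J]$ \emph{and} $[H_\Lambda J]$ it deduces $[H_\Lambda H]$.) The paper instead computes $[H_\Lambda e^ie_i]$ directly: using that each $a\in\fg$ is primary of weight $\tfrac12$ for $H_0$, the only issue is the deformation by $Tv$, and one checks that the non-primary contributions $\lambda[v_\fh,e^i]e_i+e^i\lambda[v_\fh,e_i]$ cancel after expanding $v_\fh$ in the basis and using that $\tr_\fh\ad([e_i,e_j])$ and $\tr_{\fh^*}\ad([e^i,e^j])$ are symmetric in $i,j$. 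This direct verification is short and is the step you should plan to do.
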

	\begin{proof}
		Let $c_{ij}^k$ and $c^{ij}_k$ be the structure constants
		of $\fh$ and $\fh^*$ respectively in the bases $\{e_i\}$,
		$\{e^i\}$. 
		In order to compute $[J_\Lambda J]$ we start with:
		\begin{multline}
		{[e_j}_\Lambda e^ie_i] = \left({[e_j}, e^i] + 
		\chi (k+h^\vee) {\delta_j}^i  \right) e_i + e^i
		{[e_j},e_i]   +
		\int_0^\Lambda  \eta (k+h^\vee)  ([e_j,e^i],
		e_i) d\Gamma \\ =
		\left( c_j^{ik} e_k e_i - c^i_{jk} e^k e_i + c_{ji}^k e^i
		e_k \right)
		+  \chi (k + h^\vee) e_j  +
		\lambda (k + h^\vee) c^i_{ij} \\ = c^{ik}_j e_k e_i + \chi (k + h^\vee) e_j  +
		\lambda (k + h^\vee) c^i_{ij}.
\end{multline}
By skew-symmetry we obtain:
\begin{equation}
		{[e^ie_i}_\Lambda e_j] = c^{ik}_j e_k e_i - (\chi + S)
		(k + h^\vee) e_j - \lambda (k+h^\vee) c^i_{ij}.
\end{equation}
Similarly we have
\begin{multline}
		{[e^j}_\Lambda e^ie_i] = {[e^j}, e^i]
		 e_i +  e^i
		 \left( {[e^j},e_i] +  \chi (k+ h^\vee) {\delta^j}_i
		 \right) + 
		 \int_0^\Lambda  \eta (k + h^\vee) ([e^j,e^i], e_i) d\Gamma \\ =
		 \left( c^{ji}_k e^k e_i + c^j_{ik} e^ie^k - c^{jk}_i e^i
		 e_k\right)
		-  \chi (k+h^\vee) e^j + \lambda (k+h^\vee)
		c^{ji}_{i}, \\ = c^j_{ik} e^i e^k -  \chi (k+h^\vee) e^j + \lambda (k+h^\vee)
		c^{ji}_{i},
\end{multline}
and by skew-symmetry we obtain:
\begin{equation}
	{[e^i e_i}_\Lambda e^j] = c^j_{ik} e^i e^k + (\chi + S) (k +
	h^\vee) e^j - \lambda (k + h^\vee) c^{ji}_i.
	\end{equation}
Now we proceed using the non-commutative Wick formula:
\begin{multline}
	{[e^ie_i}_\Lambda e^j e_j] = \left( c^j_{ik} e^i e^k + (\chi + S)
	(k+h^\vee)e^j - \lambda (k+h^\vee) c^{ji}_i \right) e_j + \\ e^j \left(
	c^{ki}_j e_k e_i + (\chi +S) (k+ h^\vee) e_j + \lambda (k+h^\vee)
	c^i_{ij} \right) + \int_0^\Lambda c^j_{ik} [e^i e^k_\Gamma e_j]
	d\Gamma + \\ \int_0^\Lambda (\eta - \chi) (k+h^\vee) ([e^j, e_j] + \eta (k +
	h^\vee) \dim \fh ) d\Gamma.
	\label{eq:quseqwe}
\end{multline}
We can compute the integral term easily as:
\begin{equation*}
	 2 \lambda (k+h^\vee) c^i_{ji} e^j  + \lambda (k + h^\vee) [e^j,
	 e_j] + \lambda \chi (k+h^\vee)^2 \dim \fh,
	\end{equation*}
and replacing in (\ref{eq:quseqwe}) we obtain:
\begin{multline}
	{[e^ie_i}_\Lambda e^j e_j] =  \left( c^j_{ik} e^i e^k + (\chi + S)
	(k+h^\vee)e^j  \right) e_j + \\ e^j \left(
	c^{ki}_j e_k e_i + (\chi + S) (k+ h^\vee) e_j 
	\right) + \lambda \chi (k+h^\vee)^2 \dim \fh = \\ e^i (e^j [e_i,
	e_k] ) + 2 (k + h^\vee) c^{j}_{ij} T e^i + e_i (e_j [e^i, e^j])
	+\\
	(k + h^\vee) \left( e^j Se_j +  e_j Se^j \right) + (k + h^\vee) T
	[e^j, e_j] + \lambda \chi (k+h^\vee)^2 \dim \fh \\ = (k+ h^\vee)
	\left(e^j Se_j + e_j Se^j \right) + e^i (e^j
	[e_i, e_k]) + e_i (e_j [e^i, e^j]) + \\ (k+h^\vee) \left( c^j_{ij}
	Te^i + c^{ij}_j Te_i \right) + \lambda \chi (k + h^\vee)^2 \dim
	\fh,
\end{multline}
proving (1). 
In order to prove (2), a simple computation shows that $H$
can be written as  in (\ref{eq:hdeformed}) with $v =
\tfrac{v_\fh}{k+h^\vee}$. Therefore $H_0$ generates an $N=1$ algebra
of central charge $c_0 = \tfrac{2 k \dim \fh}{k+h^\vee} + \dim \fh$ and each
element of $\fg$ is a primary field of conformal weight $1/2$ with
respect to $H_0$
\cite{kacdesole2}.  It follows
from (\ref{eq:normv}) that the central charge of $H$ is given by $c = 3
\dim \fh$. We only need to prove that $J$ is primary of conformal weight
$1$. For this we compute:
\[ \begin{aligned} {[H}_\Lambda e^i e_i] &= \left( (2T + \lambda +
	\chi S)e^i - \frac{\lambda}{k + h^\vee} [v_\fh, e^i]  - \lambda \chi
	(v_\fh, e^i) \right) e_i  \\ & \quad+ e^i \left(
	(2T + \lambda + \chi S) e_i - \frac{\lambda}{k+h^\vee} [v_\fh,
	e_i] - \lambda \chi (v_\fh, e_i)
	\right) + \\ & \quad \int_0^\Lambda (-2 \gamma + \lambda -\chi
	\eta) ([e^i, e_i] + \eta (k + h^\vee) \dim \fh )  - \lambda \eta
	(v_\fh, [e^i, e_i]) d \Gamma, \\ &= (2T + 2\lambda + \chi S) e^ie_i
	- \frac{\lambda}{k+h^\vee} ([v_\fh, e^i] e_i + e^i [v_\fh, e_i]).
\end{aligned}
	\]
Expanding $v_\fh = c^{kj}_j e_k + c^j_{kj} e^k$ we obtain
\[ [v_\fh, e^i] e_i = c^{kj}_j c_{k}^{il} e_le_i - c^{kj}_j c_{kl}^i e^l
e_i + c^j_{kj} c^{ki}_l e^l e_i, \] and similarly 
\[ e^i[v_\fh, e_i] = c^{kj}_j c_{ki}^l e^i e_l + c^j_{kj} c^{k}_{il} e^i
e^l - c^{j}_{kj} c^{kl}_{i}  e^i e_l, \]
from where we obtain
\begin{multline*} [H_\Lambda J] = (2 T + 2\lambda + \chi S) J - \frac{i \lambda}{(k +
h^\vee)^2} \left( \tr|_{\fh^*} \ad ([e^i, e^j])   e_i e_j + \right. \\
\left. \tr|_\fh \ad
([e_i, e_j]) e^i e^j  \right) = (2T + 2\lambda + \chi S) J. 
\end{multline*}
\end{proof}
	\label{sec:drinfelddouble}

\section{Preliminaries on geometry} \label{sec:generalized}
In this section we recall the basic definitions of generalized complex geometry
following \cite{gualtieri1} and \cite{gualtieri2}. We also briefly recall the
notion of \emph{unimodularity} for a Lie algebroid due to Weinstein
\cite{weinstein2}. 

Let $M$ be a smooth manifold and denote by $T$ the tangent bundle of
$M$.
\begin{defn}
	A Courant algebroid is a vector bundle $E$ over $M$, equipped with a
	nondegenerate symmetric bilinear form $\langle ,\rangle $ as
	well as a skew-symmetric bracket $[,]$ on
	$C^\infty(E)$ and with a smooth bundle map $\pi: E
	\rightarrow  T$ called the anchor. This induces a natural
	 differential operator $\mathcal{D}: C^\infty(M) \rightarrow
	 C^\infty(E)$ as $\langle \mathcal{D}f, A\rangle  = \tfrac{1}{2} \pi(A) f$ for
	all $f \in C^\infty (M)$ and $A \in
	C^\infty(E)$. 
 	 These structures should satisfy:
	\begin{enumerate}
		\item $\pi([A,B]) = [\pi(A), \pi(B)], \quad \forall A, B
			\in C^\infty(E)$. 
		\item The bracket $[,]$ should satisfy the following
			analog of the Jacobi identity. If we define the
			\emph{Jacobiator} as $\mathrm{Jac}(A,B,C) =
			[ [A,B], C] + [ [B, C], A ] + [ [ C,A],B]$.  And
			the \emph{Nijenhuis} operator 
			\[ \mathrm{Nij} (A,B,C) = \frac{1}{3} \left( \langle 
			[A,B], C\rangle  + \langle  [B,C], A\rangle  + \langle  [C, A], B\rangle 
			\right). \]
			Then the following must be satisfied:
			\[\mathrm{Jac} (A,B, C) = \mathcal{D} \left(
			\mathrm{Nij} (A, B, C)
			\right), \quad \forall A, B, C \in
			C^\infty(E) \]
		\item $[A, f B] = (\pi(A) f) B + f [A, B] - \langle A, B\rangle 
			\mathcal{D}
			f$, for all $A, B \in C^\infty(E)$ and
			$f \in C^\infty(M)$, 
		\item $\pi \circ \mathcal{D} = 0$, i.e. $\langle \mathcal{D} f,
			\mathcal{D} g\rangle  = 0,
			\quad \forall f,g \in C^\infty(M)$. 
		\item $\pi(A)\langle B, C\rangle  = \langle [A, B] + \mathcal{D}\langle A,B\rangle , C\rangle  + \langle B, [A, C]
			+ \mathcal{D} \langle A,C\rangle \rangle , \quad \forall A, B, C \in
			C^\infty(E)$. 
	\end{enumerate}
	A Courant algebroid $E$ is called \emph{exact} if the following
	sequence is exact:
	\[ 0 \rightarrow T^* \xrightarrow{\pi^*} E \xrightarrow{\pi} T
	\rightarrow 0, \]
	where we use the inner product in $E$ to identify it with its
	dual.
	\label{defn:1}
\end{defn}

This definition extends easily to the complexified situation. 
\begin{ex}
	$E=(T \oplus T^*)\otimes \mathbb{C}$, $\langle,\rangle$ and $[,]$ are respectively the natural symmetric pairing and
	the Courant bracket defined as:
	\begin{equation*}
		\begin{aligned} \langle X + \zeta, Y + \eta\rangle &= \frac{1}{2} \left( i_X \eta + i_Y\zeta
\right).  \\
 {[X + \zeta}, Y + \eta] &= [X, Y] + \mathrm{Lie}_X \eta -
\mathrm{Lie}_Y \zeta - \frac{1}{2} d(i_X \eta - i_Y \zeta).
\end{aligned}
\end{equation*}
\label{ex:courant1}
\end{ex}

From now on, we will work only with exact Courant algebroids, although
some of the results (notably Prop. \ref{prop:universal}) hold in a more
general case.

\begin{defn}[{\cite[4.14]{gualtieri1}}]
	A generalized almost complex structure on a real $2n$-dimensional manifold
	$M$ is given by the following equivalent data:
	\begin{itemize}
		\item an endomorphism $\mathcal{J}$ of $E \simeq T \oplus T^*$ which is
			orthogonal with respect to the inner product
			$\langle, \rangle$. 
		\item a maximal isotropic sub-bundle $L < E \otimes
			\mathbb{C}$ of real index zero, i.e. $L \cap \bar{L} = 0$. 
		\item a pure spinor line sub-bundle $U < \bigwedge^* T^* \otimes
			\mathbb{C}$, called the \emph{canonical line bundle}
			satisfyinng $(\varphi, \bar{\varphi}) \neq 0$ at each point
			$x \in M$ for any generator $\varphi \in U_x$. Here $(, )$
			is the natural inner product induced from $\langle,
			\rangle$. 
	\end{itemize}
	\label{defn:almost-complex}
\end{defn}
The fact that $L$ is of real index zero implies 
\begin{equation*}
	E\otimes \mathbb{C} \simeq (T \oplus T^*) \otimes \mathbb{C} = L \oplus \bar{L} = L \oplus L^*, 
\end{equation*}
using $\langle, \rangle$ to identify $\bar{L}$ with $L^*$.
\begin{defn}[{\cite[4.18]{gualtieri1}}]
	A generalized almost complex structure $\mathcal{J}$ is said to be
	integrable to a generalized complex structure when its $+i$-eigenvalue $L <
	E \otimes \mathbb{C}$ is Courant involutive. 
	\label{defn:complex}
\end{defn}

In this case, $L$ is a Lie bi-algebroid, and $E\otimes
\mathbb{C}$ could be viewed as its \emph{Drinfeld double}. Note that $E$ acts on the sheaf of differential forms
$\bigwedge^\bullet T^*$ via the spinor representation, and this sheaf acquires 
a different grading by the eigenvalues of $\mathcal{J}$ acting via the
spinor representation: \[\bigwedge T^* = U_{-n} \oplus \dots \oplus
U_{n}. \] Clifford multiplication by sections of $\overline{L}$
(resp. $L$)
increases (resp. decreases) the grading. $U_{-n}= U_{\mathcal{J}}$ is the
canonical bundle of $(M, \mathcal{J})$. Given a non-vanishing global
section of $U_\mathcal{J}$, we obtain an isomorphism of sheaves:
\begin{equation}
	\wedge^k \overline{L} \simeq U_{k-n}.
	\label{eq:isomalgebroid}
\end{equation}
The de Rham differential can be
split as $d = \partial + \bar\partial$ such that $\partial: U_k
\rightarrow U_{k-1}$ and $\bar\partial :U_k \rightarrow U_{k+1}$.
\begin{defn}
	A generalized complex manifold $(M, \mathcal{J})$ is called
	generalized Calabi-Yau if the bundle $U_\mathcal{J}$ is
	\emph{holomorphically trivial}, i.e. it admits a non-vanishing closed global
	section.
	\label{defn:gcy}
\end{defn}
In this case the isomorphism (\ref{eq:isomalgebroid}) allows us to
identify the complex $(U_\bullet, \bar{\partial})$ with the complex
computing the Lie algebroid cohomology of $L$. Recall that given any Lie
algebroid $L$ over $M$, we can define a differential $d_L: C^{\infty}(M)
\rightarrow C^{\infty}(L^*)$ as $(d_L f)(l) = \pi_L (l) f$, where $l$ is
a section of $L$ and $\pi_L$ is the anchor map of $L$. This differential
can be extended to $\bigwedge^\bullet L^*$ by imposing the Leibniz rule
in the usual way (for $\zeta \in C^{\infty} (\bigwedge^{k-1} L^*)$):
\begin{multline}
	(d_L \zeta)(l_1, \dots, l_{k}) = \sum_i (-1)^{i+1} \pi(l_i)
	\zeta(l_1, \dots, \hat{l_i}, \dots l_k) + \\\sum_{i<j} (-1)^{i+j}
	\zeta([l_i, l_j], \dots, \hat{l_i}, \dots, \hat{l_j}, \dots,
	l_k).
	\label{eq:diffeerentialmult}
\end{multline}
The cohomologies of the complex $(\bigwedge^\bullet L^*, d_L)$ are
denoted by $H^\bullet (L)$ and are called the Lie algebroid cohomologies
of $L$ (with trivial coefficients).
If $(M, \mathcal{J})$ is a generalized Calabi-Yau manifold. The
isomorphism of (\ref{eq:isomalgebroid}) is an isomorphism of complexes
(using $\bar{L} = L^*$).
Moreover, in this case, the Lie algebroids $L$ and $L^*$ are both
\emph{unimodular}, a notion that we now recall. 

For a Lie algebroid $L$ we have the corresponding sheaf of twisted
differential operators $U(L)$. The sheaf $\bigwedge^\mathrm{top} T^*$ is
always a right twisted D-module, and the corresponding left $U(L)$-module
is then the line bundle $Q_L = \bigwedge^{\mathrm{top}} L \otimes
\bigwedge^{\mathrm{top}} T^*$. Suppose for simplicity that the line bundle $Q_L$ is trivial,
for each non-vanishing section $s$ of $Q_L$ we can define $\theta_s \in
C^{\infty} L^*$ by \[ \theta_s(l) s = l \cdot s, \] where we use the left
D-module structure of $Q_L$ on the RHS.  It turns out that $\theta$ gives
rise to a well 
defined element of $H^1 (L, Q_L)$, the first Lie algebroid
cohomology of $L$ with coefficients in $Q_L$ (see \cite{weinstein1} for
details).
\begin{defn}[\cite{weinstein2}]
	A Lie algebroid $L$ is called \emph{unimodular} if the class
	$\theta \in H^1(L, Q_L)$ above constructed vanishes.
	\label{defn:unimodularity}
\end{defn}
Now let $L$ be a unimodular Lie algebroid (of rank $k$). Let $U$ be open
in $M$ and choose a local
frame $\{e_i\}$ for
$L$, we obtain a section $s = e_1 \wedge \dots \wedge e_k$ for
$\bigwedge^\mathrm{top}L$. We can choose a local volume form $\mu \in
C^\infty (\bigwedge^{top} T^*)$ such that the class $\theta$ is
represented by zero (we may need to shrink $U$) . If we define the \emph{structure constants} of $L$
by $[e_i, e_j]= c_{ij}^k e_k$ we obtain the identity:
\begin{equation}
	\dive_\mu e_k = -c_{ki}^i, 
	\label{eq:divergence}
\end{equation}
where we sum over repeated indexes and
$\dive_\mu e_k$ is the divergence of $e_k$ with respect to the
volume form $\mu$, defined by \[(\dive_\mu e_k) \mu = \lie_{\pi_L(e_k)} \mu.
\]
We have the following Proposition (see for example \cite[Theorem
10]{zabzine4})
\begin{prop}
	A generalized complex manifold $M$ is generalized Calabi-Yau if
	and only if $U_\mathcal{J}$ is trivial and $L$ is unimodular.
	\label{prop:unimodularity}
\end{prop}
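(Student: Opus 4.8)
The plan is to establish the equivalence by tracking precisely which of the two defining conditions of generalized Calabi-Yau (triviality of $U_\mathcal{J}$, closedness of the generating section) correspond to which geometric input on the Lie algebroid side. Recall from Definition \ref{defn:gcy} that $(M,\mathcal{J})$ is generalized Calabi-Yau precisely when $U_\mathcal{J}$ admits a non-vanishing \emph{closed} global section $\varphi$. The statement to be proved splits this into: (a) $U_\mathcal{J}$ is holomorphically trivial $\iff$ $U_\mathcal{J}$ is trivial as a smooth complex line bundle \emph{and} $L$ is unimodular. The forward direction is immediate --- a non-vanishing global section that is closed is in particular a non-vanishing global section, so $U_\mathcal{J}$ is smoothly trivial; it then remains to deduce unimodularity of $L$ (and, symmetrically, of $L^*$).

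First I would exploit the isomorphism of complexes $(\bigwedge^\bullet \overline{L},\bar\partial)\simeq (U_\bullet,\partial\text{ or }\bar\partial)$ from (\ref{eq:isomalgebroid}), valid once a non-vanishing global section of $U_\mathcal{J}$ is fixed. Under this isomorphism the top exterior power $\bigwedge^{\mathrm{top}}\overline{L}$ is identified with $U_\mathcal{J}=U_{-n}$ itself, and the relevant line bundle $Q_L=\bigwedge^{\mathrm{top}}L\otimes\bigwedge^{\mathrm{top}}T^*$ becomes identified (via the spinor representation, which is how $E$ acts on forms) with a trivial bundle once $\varphi$ is chosen. Concretely: the chosen section $\varphi$ of $U_\mathcal{J}$ furnishes a trivialization of $Q_L$, so the modular class $\theta\in H^1(L,Q_L)\cong H^1(L)$ is represented by the $1$-cochain $\theta_\varphi$ defined by $\theta_\varphi(l)\,\varphi = l\cdot\varphi$. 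The key computation is then that, \emph{with respect to this trivialization}, the action of $l\in C^\infty(L)$ on $\varphi$ via the spinor representation is, up to the divergence term, the Lie algebroid differential applied to $\varphi$; more precisely one shows $d_L\varphi$ (equivalently $\bar\partial\varphi$ transported through (\ref{eq:isomalgebroid})) encodes exactly the cochain $\theta_\varphi$. Hence $\varphi$ closed $\iff$ $\theta_\varphi=0$ as a cocycle $\iff$ $\theta=0$, i.e. $L$ unimodular. By the symmetric role of $L$ and $L^*$ (both appear as Lie algebroids with $E\otimes\mathbb{C}$ as their common Drinfeld double, and $\overline{L}=L^*$), the same section $\varphi$ witnesses unimodularity of $L^*$ as well, using the dual splitting $d=\partial+\bar\partial$ with $\partial:U_k\to U_{k-1}$.

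For the converse, suppose $U_\mathcal{J}$ is smoothly trivial with non-vanishing global section $\varphi_0$, and $L$ (hence also $L^*$) is unimodular. Unimodularity of $L$ says $\theta=0\in H^1(L)$, so there exists a smooth nowhere-zero function $g$ (after possibly also using triviality to globalize) with $d_L(g\varphi_0)=0$ along the $\bar\partial$-direction; unimodularity of $L^*$ simultaneously kills the $\partial$-direction. Combining, one adjusts $\varphi_0$ by this $g$ to produce $\varphi=g\varphi_0$ with $d\varphi=(\partial+\bar\partial)\varphi=0$, a non-vanishing \emph{closed} global section, so $(M,\mathcal{J})$ is generalized Calabi-Yau. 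One must check that the two adjustments (for $L$ and for $L^*$) are compatible --- i.e. that a single $g$ works --- which follows because $\theta_{L}$ and $\theta_{L^*}$ add up to the modular class of the Courant algebroid, which vanishes for the exact standard algebroid (the relevant volume form $\mu$ on $M$ can be normalized), so the obstruction is genuinely a single class.

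The main obstacle I anticipate is the careful bookkeeping in identifying the modular cocycle $\theta_\varphi$ with the de Rham differential of $\varphi$ under the spinor/isomorphism dictionary (\ref{eq:isomalgebroid}): one must verify that the $D$-module action of $l$ on a section of $Q_L\cong U_\mathcal{J}$ matches Clifford multiplication followed by the identification, and that the divergence formula (\ref{eq:divergence}) is exactly the local expression of $\bar\partial$ acting on the trivializing section. This is where signs and the factor conventions in the pairing $\langle,\rangle$ (and in $\mathcal{D}$) must be handled with care. Once this local identification is in place, the global statement is a formal consequence of the definition of the modular class as the obstruction to a parallel (here: closed) trivialization, and Proposition \ref{prop:unimodularity} follows. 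One may also simply cite \cite[Theorem 10]{zabzine4} for the computation and present the above as the conceptual skeleton.
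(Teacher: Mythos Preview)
The paper does not actually prove this proposition; it is stated with a reference to \cite[Theorem 10]{zabzine4} and used as a black box thereafter. So there is no ``paper's own proof'' to compare against, and your final sentence (that one may simply cite the reference) already matches what the paper does.

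That said, your sketch is essentially the correct argument, with one unnecessary complication. Since $\varphi$ is a section of $U_{-n}$ and $\partial:U_k\to U_{k-1}$, we have $\partial\varphi=0$ automatically for degree reasons; hence $d\varphi=\bar\partial\varphi$, and closedness of $\varphi$ is governed entirely by the $\bar\partial$-direction. Consequently only unimodularity of $L$ is needed, exactly as the proposition states, and your discussion of simultaneously killing the $\partial$-direction via unimodularity of $L^*$, together with the compatibility-of-two-adjustments argument invoking the vanishing of the Courant modular class, is superfluous. The clean version of your converse is: triviality of $U_\mathcal{J}$ gives a global non-vanishing $\varphi_0$, and (via the isomorphism $Q_L\simeq U_\mathcal{J}^{\otimes 2}$ of (\ref{eq:isomu})) triviality of $Q_L$; the cocycle $\theta_{\varphi_0}\in C^\infty(L^*)$ defined by $\bar\partial\varphi_0=\theta_{\varphi_0}\cdot\varphi_0$ represents the modular class; unimodularity gives a global function $f$ with $\theta_{\varphi_0}=d_Lf$, and then $\varphi=e^{-f}\varphi_0$ is the desired closed global section. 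Your identification of the ``main obstacle'' (matching the $U(L)$-module action on $Q_L$ with Clifford action on $U_\mathcal{J}$ through the spinor dictionary, and checking that (\ref{eq:divergence}) is the local avatar of $\bar\partial$ on the trivializing section) is exactly right, and is where the content of the cited theorem lies.
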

Recall from \cite[Prop. 2.2.2]{gualtieri1} that we have an isomorphism
\begin{equation}
	U_\mathcal{J} \otimes U_\mathcal{J} \simeq \det L \otimes \det
	T^*.
	\label{eq:isomu}
\end{equation}
Therefore given a generalized Calabi-Yau manifold $M$ and a local frame
$\{e_i\}$ for $L$, we can choose a closed pure spinor such that the
corresponding volume form $\mu$ satisfies (\ref{eq:divergence}).

Given a generalized complex manifold $(M, \mathcal{J})$ the projection $F =
\pi_T(L)$ gives rise to a smooth integrable distribution $\Delta$ defined by
$\Delta \otimes \mathbb{C} = F \cap \bar{F}$. 
\begin{prop}[\cite{gualtieri1}]
	Let $(M, \mathcal{J})$ be a generalized complex manifold, and let $x$ be a
	point in $M$ such that $\dim \Delta$ is locally constant at $x$. Then there
	exists an open neighborhood of $x$ in $M$ which is expressed as a product
	of a complex manifold times a symplectic one.
	\label{prop:product}
\end{prop}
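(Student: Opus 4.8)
The plan is to recover Gualtieri's local structure (``generalized Darboux'') theorem from \cite{gualtieri1}. Write $2n=\dim_\mathbb{R}M$ and let $L$ be the $+i$-eigenbundle of $\mathcal{J}$, so that $F=\pi_T(L)\subset T\otimes\mathbb{C}$ and $\Delta\otimes\mathbb{C}=F\cap\bar F$. The first step is to build the foliation. Since $L$ is Courant involutive and the anchor $\pi=\pi_T$ is a bracket homomorphism (axiom (1) of Definition \ref{defn:1}), $F$ is closed under the Lie bracket, hence so are $\bar F$ and $F\cap\bar F=\Delta\otimes\mathbb{C}$; being the complexification of the real distribution $\Delta$, the latter is involutive. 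Under the hypothesis that $\dim\Delta$ is constant near $x$, $\Delta$ is an honest subbundle there, so by the Frobenius theorem one may choose coordinates identifying a neighbourhood $U$ of $x$ with a product $S\times P$ in which the leaves of $\Delta$ are the slices $S\times\{p\}$.

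Next I would identify the two geometric structures. Using the pure spinor description of $\mathcal{J}$ from Definition \ref{defn:almost-complex}, pick a local generator $\varphi$ of the canonical line bundle $U_\mathcal{J}$ and put it, via the pointwise normal form of a pure spinor of real index zero, in the shape $\varphi=e^{B+i\omega}\Omega$ with $\Omega=\theta^1\wedge\dots\wedge\theta^k$ decomposable, the $\theta^i$ having $\mathbb{R}$-linearly independent imaginary parts, and $B,\omega$ real two-forms. One checks that $\Delta$ is exactly the real distribution annihilated by $\mathrm{Re}\,\Omega$ and $\mathrm{Im}\,\Omega$ along which $\omega$ is nondegenerate; thus the $\theta^i$ endow $P$ with an almost complex structure and $\omega$ restricts to a nondegenerate two-form on the leaves. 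Courant involutivity of $L$ is then unpacked, using the integrability condition $d\varphi\in C^\infty(E)\cdot\varphi$, to show that the transverse almost complex structure is integrable (so $P$ is a complex manifold) and that $\omega$ restricts to a closed form along the leaves (so they are symplectic).

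The hard part is the final step: upgrading this fibered picture to an actual product. A priori one only has a symplectic fibration over a complex base together with a connecting two-form (a $B$-field), and one must show that after a $B$-field transform and a leaf-preserving diffeomorphism the pure spinor becomes $e^{i\omega_0}\theta_0^1\wedge\dots\wedge\theta_0^k$ in split coordinates, i.e.\ that both the $B$-field and the twisting of the fibration can be removed. This is carried out by a Moser/Poincar\'e-lemma homotopy argument along the leaf directions, performed smoothly in the transverse parameters --- the constancy of $\dim\Delta$ is precisely what makes the deformation uniform near $x$ --- mirroring the proof of the ordinary symplectic Darboux theorem. Assembling the normal forms on the $S$- and $P$-factors then exhibits $U$ as a product of a symplectic manifold and a complex manifold. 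The complete argument is in \cite{gualtieri1}.
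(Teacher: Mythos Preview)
The paper does not supply its own proof of this proposition; it is stated with the citation \cite{gualtieri1} and used as a black box (see Remark~\ref{rem:nosabemos}(1)). So there is nothing in the paper to compare your argument against beyond the reference itself. Your outline is a fair summary of Gualtieri's generalized Darboux theorem: Frobenius for the constant-rank $\Delta$, the pure-spinor normal form $e^{B+i\omega}\Omega$ to read off the transverse complex and leafwise symplectic data, and a Moser-type homotopy to kill the $B$-field and trivialize the fibration. For the purposes of this paper your last line, ``The complete argument is in \cite{gualtieri1},'' is exactly what the authors do.
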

We remark however that there exist generalized Complex and generalized Calabi-Yau manifolds with
points where $\dim \Delta$ is not locally constant.

\section{Sheaves of vertex algebras} \label{sec:sheaf}
In this section we recall some results from \cite{gerbes2} and \cite{bressler1}
in the language of SUSY vertex algebras, following \cite{heluani8}.

Let $(E, \langle ,\rangle , [,], \pi)$ be a Courant algebroid. 
Let  $\Pi E$ be the corresponding purely odd super vector bundle. we will abuse
notation and denote by $\langle, \rangle$ the corresponding
super-skew-symmetric bilinear form, and by $[,]$ the corresponding super-skew-symmetric
degree $1$ bracket on $\Pi E$. Similarly, we obtain an odd
differential operator $\mathcal{D}: C^\infty(M) \rightarrow 
C^\infty(\Pi E)$. If no confusion should arise, when $v$ is an element of a vector
space $V$, we will denote by the same symbol $v$ the corresponding element of $\Pi
V$, where $\Pi$ is the \emph{parity change operator}. Recall that for
sections of $E$ we have the Dorfman bracket $\circ$ which is defined in
terms of the Courant bracket and $\mathcal{D}$ as 
\begin{equation}
	X \circ Y = [X, Y] +
\mathcal{D} \langle X, Y \rangle.
\label{eq:dorf}
\end{equation}

The following proposition from from \cite{heluani8} (cf.
 \cite{bressler1}) describes the construction of the chiral de Rham
 complex in parallel to the construction of \emph{twisted differential
 operators} given a Lie algebroid: 
 \begin{prop}
	 For each complex Courant algebroid $E$ over a differentiable manifold
	 $M$, there exists a sheaf $U^\mathrm{ch}(E)$ of SUSY vertex algebras on $M$
	 generated by functions $i: C(M) \hookrightarrow
	 U^\mathrm{ch}(E)$, and sections of $\Pi E$, $j: C( \Pi E)
	 \hookrightarrow U^\mathrm{ch}(E)$ subject to the relations:
	 \begin{enumerate}
		 \item $i$ is an ``embedding of algebras'', i.e. 
			 $i(1) = \vac$, and $i(fg) =
			 i(f) \cdot i(g)$, where in the RHS we use the normally
			 ordered product in $U^\mathrm{ch}(E)$.
		 \item $j$ imposes a compatibility condition between the
			 Dorfman bracket in $E$ and the Lambda bracket in
			 $U^\mathrm{ch}(E)$: \[ [j(X)_\Lambda j(Y)] =
			 j (X\circ Y) + 2\chi i (\langle X, Y \rangle).\]
		 \item $i$ and $j$ preserve the $\cO$-module structure of
			 $E$, i.e. $j (f X) = i(f)
			 \cdot j(X)$. 
		 \item $\mathcal D$ and $S$ are compatible, i.e. $j
			 \mathcal D f = S i (f)$. 
		 \item We impose the usual commutation relation \[
			 [j(X)_\Lambda i(f)] = i (\pi(X) f).\]
	 \end{enumerate}
	 In the particular case when $E = (T\oplus T^*) \otimes
	 \mathbb{C}$ is the standard
	 Courant algebroid, then $U^\mathrm{ch}(E)$ is the chiral de Rham
	 complex of $M$, denoted by $\Omega^\mathrm{ch}_M$ for historical
	 reasons.
	 \label{prop:universal}
 \end{prop}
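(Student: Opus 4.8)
The plan is to construct the sheaf $U^\mathrm{ch}(E)$ locally on a cover of $M$ by coordinate patches trivializing $E$, and then glue. First I would work on an open set $U$ small enough that $E|_U$ admits a local frame $\{e_a\}$ of sections that are \emph{isotropic pairs} adapted to the splitting coming from exactness: in such a frame the pairing $\langle e_a, e_b\rangle$ is constant and the Dorfman brackets $e_a \circ e_b$ are $C^\infty(U)$-linear combinations of the frame together with $\mathcal{D}$ of functions. On this patch one defines the \emph{local model} to be the $N_K=1$ SUSY vertex algebra freely generated (in the sense of universal enveloping SUSY vertex algebras of SUSY Lie conformal algebras, as recalled after Definition \ref{defn:k.conformal.1}) by symbols $i(f)$ for $f \in C^\infty(U)$ and $j(e_a)$, modulo the $\Lambda$-bracket relations (1)--(5) in the statement. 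The point of using the frame is that relations (2) and (5), together with the Leibniz/quasi-Leibniz rules for the normally ordered product, let one reduce every $\Lambda$-bracket of generators to an expression back in the generators; one must check that the resulting Lie conformal algebra axioms (sesquilinearity, skew-symmetry, Jacobi of Definition \ref{defn:k.conformal.1}) hold. Skew-symmetry of the bracket relation (2) follows because the Dorfman bracket's symmetric part is exactly $\mathcal{D}\langle\cdot,\cdot\rangle$, which is matched by the $2\chi i(\langle X,Y\rangle)$ term via sesquilinearity $[S\,i(\langle X,Y\rangle)] \leftrightarrow \chi$; the Jacobi identity is where the Courant algebroid axioms (the Jacobiator = $\mathcal{D}\circ\mathrm{Nij}$ identity of Definition \ref{defn:1}) enter, and verifying that the vertex-algebra Jacobi identity is implied by it is the heart of the local construction.

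Next I would show independence of the local model from the choice of frame: given two frames $\{e_a\}$, $\{e'_a\}$ related by $e'_a = g_a^b e_b$ with $g \in C^\infty(U, O(E))$, the assignments $j(e'_a) := i(g_a^b)\cdot j(e_b)$ and $i(f) := i(f)$ extend, by the universal property of the enveloping SUSY vertex algebra, to an isomorphism of the two local models — one checks the defining relations are preserved, which again is a computation using the quasi-Leibniz rule and the fact that $g$ is orthogonal (so that the anomalous $2\chi$-terms transform correctly) and that $\mathcal{D}$ intertwines with $S$ via relation (4). Having this, the local models patch into a sheaf $U^\mathrm{ch}(E)$ of SUSY vertex algebras on $M$, with $i$ and $j$ globally defined since the transition functions are compatible with them. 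The universal property in the statement (``generated by'' $i$ and $j$ subject to relations (1)--(5)) then follows from the universal property of the local enveloping SUSY vertex algebras together with the gluing.

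Finally, for the last sentence, when $E = (T\oplus T^*)\otimes\mathbb{C}$ with the Courant bracket of Example \ref{ex:courant1}, I would exhibit the local model explicitly in a coordinate frame: $i(x^\mu), i(\partial_\mu)$-style generators for $C^\infty$, and $j(\partial_{x^\mu}) =: \varphi^\mu$, $j(dx^\mu) =: b^\mu$ as the odd fields. Computing the $\Lambda$-brackets from relations (1)--(5) and the Dorfman bracket of Example \ref{ex:courant1}, one finds precisely the $bc\beta\gamma$-type OPEs — a free-field SUSY vertex algebra — that define the chiral de Rham complex $\Omega^{\mathrm{ch}}_M$ of \cite{malikov}, and the transition functions one obtains from the orthogonal change-of-frame above are exactly the ones in \cite{malikov}, \cite{gerbes2}. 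The main obstacle throughout is the Jacobi identity for the local SUSY Lie conformal algebra: one must carefully track the normally-ordered correction terms (the $\sum_{j\ge 0} a_{(-j-2|1)}b_{(j|1)}c$ pieces from quasi-associativity and the $\int_0^\Lambda$ term from quasi-Leibniz) and verify that their failure to vanish is governed precisely by the Nijenhuis term $\mathcal{D}\,\mathrm{Nij}(A,B,C)$, so that the Courant axiom makes the identity hold on the nose; this is the computation deferred to the cited references \cite{heluani8}, \cite{bressler1}.
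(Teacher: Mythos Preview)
The paper does not actually prove this proposition: it is quoted from \cite{heluani8} (cf.\ \cite{bressler1}), and no argument is given in the body or appendix. Your outline --- build a local model on a trivializing patch, verify the SUSY Lie conformal algebra axioms from the Courant axioms, show independence of frame via orthogonal changes of basis, glue, and then match the standard $E=(T\oplus T^*)\otimes\mathbb{C}$ case to the $bc\beta\gamma$ description of $\Omega^{\mathrm{ch}}_M$ --- is exactly the strategy of those references, and you correctly isolate the Jacobi identity (and its interaction with the Nijenhuis/Jacobiator axiom of Definition~\ref{defn:1}) as the crux.

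One point deserves tightening. You describe the local model as ``the $N_K=1$ SUSY vertex algebra freely generated \dots\ modulo the $\Lambda$-bracket relations (1)--(5)''. But relations (1) and (3) are constraints on the \emph{normally ordered product} ($i(fg)=i(f)\cdot i(g)$ and $j(fX)=i(f)\cdot j(X)$), not on the $\Lambda$-bracket, so the object you want is not literally the universal enveloping SUSY vertex algebra of a SUSY Lie conformal algebra in the sense recalled after Definition~\ref{defn:k.conformal.1}. One must either pass to a further quotient by the ideal generated by those normal-ordering relations (and check it is compatible with the vertex algebra structure --- this is where quasi-associativity and the non-commutative Wick formula really bite), or, equivalently, use the vertex/Courant algebroid formalism of \cite{bressler1}, \cite{gerbes2}, which packages exactly this extra layer of structure. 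Your later remarks about tracking quasi-associativity corrections show you are aware of the issue; just be explicit that the ``free on a Lie conformal algebra'' step alone does not impose (1) and (3).
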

Using this proposition, we will abuse notation and use the same symbols for
sections of $E\otimes \mathbb{C}$ when they are viewed as sections of
$U^\mathrm{ch}(E)$. 
\section{$N=2$ supersymmetry} \label{sec:sections}
In this section we state the main results of this article, we postpone their proofs
for the Appendix. We also show that these supersymmetries generalize those of
\cite{heluani2} and \cite{heluani8} in the Calabi-Yau and symplectic
case. 

Let $M$ be an orientable and differentiable manifold of Real dimension $N$. Let $T :=
T_M$ be its tangent bundle and $T^* := T^*_M$ its cotangent bundle, and
let $E$ be an exact Courant algebroid on $M$.   Let
$\mathcal{J} \in \End E \simeq \End (T \oplus T^*)$ be a generalized complex structure
and let $L \leq (T \oplus T^*)\otimes \mathbb{C}$ be the corresponding
Dirac structure,  and let $\rho:= {\rho^i}_j$ be the transition functions
for $L$. Let $\{e_i\}$ be a local frame for $L$ and $\{e^i\}$ be the
corresponding dual frame for $L^* \simeq \bar{L}$, i.e. $\langle e^i,
e_j\rangle = {\delta^i}_j$. 

Let $U := U_L \leq \bigwedge^* T^*$ be the associated
canonical bundle. It follows from (\ref{eq:isomu}) that if $U$ is
trivial, then $\det L$ and $\det L^*$ are trivial bundles. Moreover, we
will fix a closed pure spinor and a corresponding volume form $\mu$ such that
(\ref{eq:divergence}) holds for the frame $\{e_i\}$ and the
corresponding dual statement holds for the frame $\{e^i\}$.
\begin{lem}
	Let $M$ be a generalized Calabi-Yau manifold.  Then the
	following defines a global section of $U^\mathrm{ch}(E)$: (we sum
	over repeated indexes)
	\begin{equation}
		J = \frac{\sqrt{-1}}{2} e^ie_i. 
		\label{eq:current1}
	\end{equation}
	\label{lem:section1}
\end{lem}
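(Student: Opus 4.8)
The plan is to verify that the locally-defined expression $J = \tfrac{\sqrt{-1}}{2}\,e^i e_i$ in $U^\mathrm{ch}(E)$ is independent of the choice of local frame $\{e_i\}$ for $L$, hence patches to a global section. The key point is that $J$ is built from the identity endomorphism $\sum_i e^i \otimes e_i \in L^* \otimes L \simeq \mathrm{End}(L)$ viewed inside $U^\mathrm{ch}(E)$ via the normally ordered product, so naively $J$ should be the ``R-matrix'' superfield of Proposition~\ref{prop:doubles} and manifestly frame-independent. The only subtlety is that the normally ordered product is not commutative: under a change of frame $e_i \mapsto {\rho^j}_i e_j$, $e^i \mapsto ({\rho^{-1}})^i{}_k e^k$, one finds $e^i e_i \mapsto ({\rho^{-1}})^i{}_k e^k\,({\rho^j}_i e_j)$, and to move the transition functions $\rho$ (which are functions on $M$) past the generators and recombine them into $\delta^j_k$ one must pay a correction term coming from $[e^k{}_\Lambda {\rho^j}_i]$ and from quasi-associativity.

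First I would set up the change-of-frame computation explicitly. Using axiom (5) of Proposition~\ref{prop:universal}, $[e^k{}_\Lambda i({\rho^j}_i)] = i(\pi(e^k){\rho^j}_i)$, and using axiom (3), $i({\rho^j}_i)\cdot e_j = {\rho^j}_i e_j$ as a section of $\Pi E$. Applying the non-commutative Wick formula and quasi-associativity to rewrite $({\rho^{-1}})^i{}_k\, e^k\, ({\rho^j}_i e_j)$, the leading term is $({\rho^{-1}})^i{}_k {\rho^j}_i\, (e^k e_j) = \delta^j_k\, e^k e_j = e^i e_i$, and the correction is a term proportional to $({\rho^{-1}})^i{}_k\,(\pi(e^k){\rho^j}_i)\, e_j$ together with the $S$-exact/derivative pieces produced by quasi-associativity acting through $\mathcal{D}$. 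I expect these corrections to assemble into $\sum_j \bigl(\pi\text{-derivative of }\log\det\rho\bigr) e_j$-type expressions, i.e. precisely the divergence terms.

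The main obstacle is showing that this correction term \emph{vanishes}, and this is exactly where the generalized Calabi-Yau hypothesis enters. Because $M$ is generalized Calabi-Yau, Proposition~\ref{prop:unimodularity} tells us $L$ is unimodular, and we have chosen the closed pure spinor and the volume form $\mu$ so that $(\ref{eq:divergence})$, $\dive_\mu e_k = -c^i_{ki}$, holds for our frame. Unimodularity with this normalization forces $\pi(e^k)({\rho^j}_i)$ contracted appropriately to be a total divergence that cancels; concretely, the transition functions between two admissible frames (both satisfying $(\ref{eq:divergence})$ with respect to the \emph{same} $\mu$ coming from a \emph{closed} pure spinor via $(\ref{eq:isomu})$) must have $({\rho^{-1}})^i{}_k\,\pi(e^k)({\rho^j}_i) = \pi(e^j)\log\det\rho$ summing to zero modulo the compensating change in $\mu$. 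I would phrase this as: the obstruction to frame-independence of $J$ is a Čech $1$-cocycle valued in closed $1$-forms on $M$ whose cohomology class is the modular class of $L$ (plus the contribution of $\bar L$), and the generalized Calabi-Yau condition is precisely that this class vanishes. Once the correction is shown to vanish, $J$ glues to a global section, completing the proof. The detailed bookkeeping of the quasi-associativity and Wick-formula corrections is deferred to the Appendix, as announced.
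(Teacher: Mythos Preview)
Your overall strategy is right---compute how $J$ transforms under a change of frame for $L$, identify the anomalous term, and use the generalized Calabi--Yau hypothesis to kill it---but both the form of the anomaly and the hypothesis you invoke are off.

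First, the correction term. When you rewrite $((\rho^{-1})^i{}_k e^k)(\rho^j{}_i e_j)$ using quasi-associativity, the relevant contribution comes from the $(0|1)$-modes, i.e.\ the $\chi$-coefficients of the $\Lambda$-brackets, \emph{not} from the constant term $[e^k{}_\Lambda \rho^j{}_i] = \pi(e^k)\rho^j{}_i$. Concretely, the only nonzero $\chi$-term in $[e^k{}_\Lambda \rho^j{}_i e_j]$ is $2\chi\,\rho^k{}_i$, so the quasi-associativity defect is
\[
i\,\bigl(T(\rho^{-1})^i{}_k\bigr)\,\rho^k{}_i \;=\; i\,\det\rho\;T\!\bigl(\det\rho^{-1}\bigr),
\]
using the standard identity $\partial\log\det\rho = \mathrm{tr}(\rho^{-1}\partial\rho)$. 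There is no surviving term of the shape $(\rho^{-1})^i{}_k(\pi(e^k)\rho^j{}_i)e_j$; those pieces either do not arise in the $(n|1)$-sums or cancel in the intermediate quasi-commutativity steps. So the anomaly is a pure $T(\text{function})$ term, not a linear combination of the $e_j$.

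Second, and more seriously, this anomaly is killed by the \emph{triviality of $\det L$} (equivalently $c_1(L)=0$), which follows from the triviality of the canonical bundle $U_\mathcal{J}$ via the isomorphism $U_\mathcal{J}^{\otimes 2}\simeq \det L\otimes\det T^*$. Once $\det L$ is trivial you may choose local frames with $\det\rho\equiv 1$, and the correction vanishes identically. Unimodularity and the modular class play no role in this lemma; they enter only later, in the analysis of $H$ and the $N=2$ relations. By invoking the modular class here you are using the wrong piece of the generalized Calabi--Yau hypothesis and would not be able to conclude, since the anomaly $i\det\rho\,T\det\rho^{-1}$ is not a divergence and is not governed by $\dive_\mu$.

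Finally, a minor point: the paper proves this lemma directly in the body of Section~5, not in the Appendix; only Lemma~\ref{lem:h.def} and Theorem~\ref{thm:n=2thm} are deferred.
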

\begin{proof}
	It follows from Proposition \ref{prop:universal} that  under a
	change of coordinates, $J$ 
	transforms as:
	\begin{equation}
		\frac{i}{2} \left(  {(\rho^*)^i}_j e^j\right) \left( {\rho_i}^k
		e_k
		\right).
		\label{eq:1.1}
	\end{equation}
	To apply quasi-associativity we need to compute the $\chi$ terms
	in the Lambda bracket $[{e^j}_\Lambda {\rho_i}^k e_k]$ and these
		are easily shown to be $2 \chi {\rho_i}^k {\delta^j}_k$.
		Therefore (\ref{eq:1.1}) reads:
		\begin{multline}
			\frac{i}{2}{(\rho^*)^i}_j \Bigl[ e^j \bigl( {\rho_i}^k e_k \bigr)
			\Bigr] + i T\left( { (\rho^*)^i}_j \right) {\rho_i}^j
			=\frac{i}{2}  {(\rho^*)^i}_j \Bigl[ e^j \bigl( e_k {\rho_i}^k \bigr)
			\Bigr] + \\ + i T\left( { (\rho^*)^i}_j \right)
			{\rho_i}^j
			=\frac{i}{2} {(\rho^*)^i}_j \Bigl[ \bigl(e^j e_k\bigr) {\rho_i}^k
			\Bigr] + i T\left( { (\rho^*)^i}_j \right) {\rho_i}^j
			= 
			\\ = \frac{i}{2} \left[ {(\rho^*)^i}_j {\rho_i}^k \right]
			\left(e^j e_k
			\right) + i T \left( {(\rho^*)^i}_j \right) {\rho_i}^j =
			\frac{i}{2} e^i e_i + i T ({(\rho^*)i}_j) {\rho_i}^j.
			\label{eq:2.1}
		\end{multline}
			Using 
			\begin{equation}
				\frac{\partial \det \rho}{\partial x_a} =
				\frac{\partial \det \rho}{\partial {\rho_i}^j}
				\frac{\partial {\rho_i}^j}{ \partial x_a} = (\det
				\rho) {(\rho^{-1})_j}^i \frac{\partial
				{\rho_i}^j}{\partial x_a},
				\label{eq:det_eq}
			\end{equation}
			we see that this becomes: 
			\[\frac{i}{2} e^i e_i + i \det \rho \,T \det
			\rho^{-1}. \] The second term of this last
			expression can be chosen to be zero if $c_1(L) =
			0$, which in turn happens if $U$ is trivial.
		\end{proof}
	\begin{rem}
		Note that in order to construct these sections, in
		\cite{heluani2} and \cite{heluani8} the authors used a \emph{connection}
		on $T_M$. This is replaced in this setting with the existence of a global
		section of $U$.
		\label{rem:noconnection}
	\end{rem}
	\begin{lem} Let $M$ be a generalized Calabi-Yau manifold. 
		Define a local section of $U^\mathrm{ch}(E)$ by:
		\begin{multline}
		H :=  \frac{1}{4}  \Bigl[ e^i \bigl( e^j[e_i,e_j] \bigr) +
		e_i \bigl( e_j [e^i, e^j] \bigr) \Bigr] - \frac{i}{2} T \mathcal{J} [e^i,
		e_i]  +
		 \frac{1}{2} \left( e_j Se^j
		+ e^j Se_j.
		\right) 
		\label{eq:h.def}
		\end{multline}
	 Then the following is true:
		\begin{enumerate}
			\item $H$ defines a global section of
				$U^\mathrm{ch}(E)$.
			\item We have the following OPE:
				\begin{equation*}
					{[J}_\Lambda J] = - \left( H + \frac{c}{3} \lambda
					\chi \right), \qquad c = 3 \dim_\mathbb{R} M.
				\end{equation*}
		\end{enumerate}
	\label{lem:h.def}
	\end{lem}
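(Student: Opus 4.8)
The plan is to reduce the statement to the already-proven Proposition~\ref{prop:doubles} by exploiting that a generalized complex structure makes $L \oplus \bar{L} = L \oplus L^*$ into a Lie bialgebroid, i.e.\ an ``algebroid version'' of the double of a Lie bialgebra. Concretely, I would work in a local frame $\{e_i\}$ for $L$ with dual frame $\{e^i\}$ for $L^* \simeq \bar{L}$, adjusted so that the volume form $\mu$ satisfies the divergence identity \eqref{eq:divergence} and its dual. In such a frame the $\Lambda$-brackets among the generators $j(e_i)$, $j(e^i)$ of $U^\mathrm{ch}(E)$ are governed by relation (2) of Proposition~\ref{prop:universal}: the Dorfman bracket $e\circ f$ supplies the structure-constant terms (with anchor-dependent corrections that, by the Courant axioms and Example~\ref{ex:courant1}, produce exactly the $c^{ij}_k$ and $c^k_{ij}$ terms plus $\mathcal D$-terms) and the $2\chi\langle e,f\rangle$ term supplies the central $\chi$-piece. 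Against the known brackets $[a_\Lambda b] = [a,b] + \chi(k+h^\vee)(a,b)$ of the super-currents Example~\ref{ex:super-currents}, this is the $k + h^\vee = 1$ specialization, with the caveat that now the ``structure constants'' are functions on $M$ and one has the extra $\mathcal D = S i(\cdot)$ contributions.

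First I would establish part (1), that $H$ is globally defined. The argument runs exactly parallel to the proof of Lemma~\ref{lem:section1}: under a change of local frame $e_i \mapsto {\rho_i}^k e_k$, $e^j \mapsto {(\rho^*)^j}_l e^l$, each of the three groups of terms in \eqref{eq:h.def} transforms with anomalous $S$- and $T$-terms coming from quasi-associativity and the non-commutative Wick formula (the $\chi$-terms in $[{e^j}_\Lambda {\rho_i}^k e_k]$ again being $2\chi {\rho_i}^k {\delta^j}_k$). The cubic terms $e^i(e^j[e_i,e_j])$ and $e_i(e_j[e^i,e^j])$ and the $SE$-terms produce anomalies proportional to $\mathrm{tr}\,\ad$ of the frame change, which is precisely ${(\rho^{-1})}\,\partial \rho$; these combine, using \eqref{eq:det_eq}, with the $T\mathcal J[e^i,e_i]$ term — whose transformation I would track via the divergence identity \eqref{eq:divergence} — so that the total anomaly is $\propto T(\det\rho\,\partial\det\rho^{-1})$ and vanishes because $c_1(L) = 0$ (the Calabi--Yau hypothesis), just as at the end of the proof of Lemma~\ref{lem:section1}. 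The point is that unimodularity of $L$ and $L^*$ (Prop.~\ref{prop:unimodularity}) is exactly what lets one choose $\mu$ making the local divergence terms of $\{e_i\}$ and $\{e^i\}$ consistent.

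For part (2), I would compute $[J_\Lambda J]$ with $J = \tfrac{\sqrt{-1}}{2} e^i e_i$ directly via the non-commutative Wick formula, following verbatim the computation in the proof of Proposition~\ref{prop:doubles}: expand $[{e^i e_i}_\Lambda e^j e_j]$ using quasi-Leibniz, reducing to the brackets $[{e^j}_\Lambda e^i e_i]$ and $[{e_j}_\Lambda e^i e_i]$ (themselves obtained from quasi-Leibniz and skew-symmetry), and integrate the correction term $\int_0^\Lambda$. The algebroid bracket $[e^i, e_i]$, which was the fixed element $v'_\fh$ in the Lie-bialgebra case, now appears as an honest section of $E$, and the $\mathcal J$-eigenvalue structure ($\mathcal J$ acts as $+i$ on $L$, $-i$ on $L^*$) is what recombines $w - w^*$ into $-\tfrac{i}{2}\mathcal J[e^i,e_i]$; the $S$-terms $\mathcal D = S i(\cdot)$ that were absent in the Lie-algebra case contribute the $e_j Se^j + e^j Se_j$ piece. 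Collecting terms and matching against \eqref{eq:h.def} gives $[J_\Lambda J] = -(H + \tfrac{c}{3}\lambda\chi)$ with $c = 3\dim_\mathbb R M = 3\cdot 2n = 6n = 3\dim\fh + 3\dim\fh^*$ in the bialgebroid picture (each of $L$, $L^*$ having rank $n$), consistent with \eqref{eq:normv}.

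The main obstacle I expect is bookkeeping the anchor-dependent terms: unlike in Proposition~\ref{prop:doubles}, the Courant bracket on $E$ is \emph{not} the naive bracket of a Lie algebra, so each Dorfman bracket $e_i \circ e^j$ carries a $\mathcal D\langle e_i, e^j\rangle$ term and the Jacobiator is only an $\mathcal D(\mathrm{Nij})$-exact expression (Definition~\ref{defn:1}); keeping track of these $\mathcal D$-/$S$-corrections through the quasi-associativity and Wick-formula manipulations, and verifying that the ones that do not cancel assemble into exactly the $Se^j$, $Se_j$ and $T\mathcal J[e^i,e_i]$ terms of \eqref{eq:h.def} (with no leftover anomalous pieces, using $\pi\circ\mathcal D = 0$), is the delicate part. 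This, together with the globality check which relies essentially on the generalized Calabi--Yau condition through $c_1(L) = 0$ and unimodularity, is why the authors relegate the full computation to the Appendix.
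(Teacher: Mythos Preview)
Your plan for (2) is essentially the paper's own: compute $[J_\Lambda J]$ by the non-commutative Wick formula, following the template of Proposition~\ref{prop:doubles} step by step, with the extra bookkeeping that the structure ``constants'' $c^k_{ij}$, $c^{ij}_k$ are now functions on $M$, so quasi-associativity generates additional $T$-terms that must be tracked. The paper flags exactly this as ``the major difficulty'' and carries out the computation in the Appendix. (One simplification you may have overlooked: since $\langle e_i,e_j\rangle = \langle e^i,e^j\rangle = 0$ by isotropy and $\langle e^i,e_j\rangle = \delta^i_j$ is constant, the $\mathcal D$-correction in the Dorfman bracket \eqref{eq:dorf} vanishes for all pairs drawn from the frame, so the ``anchor-dependent $\mathcal D$-terms'' you worry about do not actually appear at this stage.)

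Your treatment of (1), however, differs from the paper's and is unnecessarily hard. The paper does not perform a separate frame-change computation for $H$; it simply observes that (1) \emph{follows from} (2). Once Lemma~\ref{lem:section1} shows $J$ is global, the bracket $[J_\Lambda J]$ is automatically a globally defined section of $\cH\otimes U^\mathrm{ch}(E)$, and (2) identifies its $\Lambda$-constant piece as $-H$. So $H$ is global for free. Your proposed direct check could presumably be made to work, but it duplicates effort and the cancellation of cubic anomalies under a frame change is substantially messier than what you sketch.

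Finally, a normalization slip: comparing relation (2) of Proposition~\ref{prop:universal}, namely $[j(X)_\Lambda j(Y)] = j(X\circ Y) + 2\chi\langle X,Y\rangle$, with \eqref{eq:super-currents} shows the correct analogue is $k+h^\vee = 2$, not $1$ (cf.\ Remark~\ref{rem:longshot}); this is why $J$ carries the prefactor $\tfrac{i}{2}$ and why the coefficients $\tfrac14$, $\tfrac12$ appear in \eqref{eq:h.def}. And the central charge comes out as $c = 3\dim\fh = 3\,\mathrm{rk}\,L = 3\dim_\mathbb{R} M$ directly from Proposition~\ref{prop:doubles}; your ``$3\dim\fh + 3\dim\fh^*$'' double-counts.
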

	\begin{proof}
		The proof can be found in the appendix.
	\end{proof}
	\begin{rem}
		Note that the field $H$ has the form (\ref{eq:hdefsind})
		with $k + h^\vee = 2$. We therefore may view this
		construction as an algebroid generalization of the
		Kac-Todorov or super-Sugawara construction.
		\label{rem:longshot}
	\end{rem}
		\begin{thm}
		Let $M$ be a generalized Calabi-Yau manifold.
		Let $J$ and $H$ be the corresponding sections of 
		$U^\mathrm{ch}(E)$ as constructed in Lemmas \ref{lem:section1}
		and \ref{lem:h.def}. 
		\begin{enumerate}
			\item For a function $f \in C^\infty (M)$, the
				corresponding field of $U^\mathrm{ch}(E)$ is
				primary of conformal weight $0$ with
				respect to $H$, namely:
				\[ [H_\Lambda f] = (2T + \chi S) f. \]
			\item For a section $X \in C^\infty (E)$, the
				corresponding field of $U^\mathrm{ch}(E)$
				has conformal weight $1/2$ with respect
				to $H$, but it is \emph{not primary}, it
				satisfies:
				\begin{equation}
					[H_\Lambda X] = (2T + \lambda +
					\chi S) X + \lambda \chi
					\dive_\mu X.
					\label{eq:notprimary}
				\end{equation}
			\item The fields $H$ and $J$ generate an $N=2$
				SUSY
				vertex algebra of central charge $c = 3
				\dim_\mathbb{R} M$.
		\end{enumerate}
		\label{thm:n=2thm}
	\end{thm}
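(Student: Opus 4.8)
The plan is to deduce Theorem~\ref{thm:n=2thm} from the local computation of Proposition~\ref{prop:doubles} by exploiting the fact that, as observed in Remark~\ref{rem:longshot}, the section $H$ of \eqref{eq:h.def} is exactly the superfield \eqref{eq:hdefsind} of the Drinfeld-double construction specialized to $k+h^\vee = 2$. Concretely, over a trivializing open set $U$ with a local frame $\{e_i\}$ for $L$ chosen so that \eqref{eq:divergence} holds, the sections $e_i, e^i$ of $U^\mathrm{ch}(E)$ satisfy, by Proposition~\ref{prop:universal}(2) together with \eqref{eq:dorf} and $\langle e^i, e_j\rangle = {\delta^i}_j$, the $\Lambda$-bracket relations \eqref{eq:super-currents} with $k+h^\vee = 2$, where the structure constants are those of the Lie bialgebroid $L \oplus L^*$. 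This is not literally a Lie algebra (the structure constants vary over $U$), so the first thing I would do is check that the only places where the non-constancy of $c_{ij}^k$ enters the computations of Proposition~\ref{prop:doubles} are absorbed either by the choice \eqref{eq:divergence} of volume form or by derivative terms $T(\cdot)$ that have already been accounted for in passing from \eqref{eq:current1} to its globalized form in Lemma~\ref{lem:section1}.

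Granting that, part (3) is essentially immediate: Lemma~\ref{lem:h.def} already establishes $[J_\Lambda J] = -(H + \tfrac{c}{3}\lambda\chi)$ with $c = 3\dim_{\mathbb R} M = 3\dim_{\mathbb C} L$, which is the defining relation of Example~\ref{ex:2.11.a} once we know $H$ is a Neveu--Schwarz vector and $J$ is primary of conformal weight $1$ with respect to it. That $H$ is an $N=1$ (Neveu--Schwarz) vector of the right central charge is the algebroid analogue of the first half of the proof of Proposition~\ref{prop:doubles}: write $H = H_0 + Tv$ with $v = v_\fh/2$ in the notation there (the term $-\tfrac{i}{2}T\mathcal{J}[e^i,e_i]$ in \eqref{eq:h.def} is precisely $Tv$), use that $H_0$ is the Kac--Todorov vector of Example~\ref{ex:super-currents} hence generates $NS_{c_0}$ with $c_0 = k\dim\fg/(k+h^\vee) + \dim\fg/2$ specialized appropriately, and then \eqref{eq:normv} gives the shift $c = c_0 - 3(k+h^\vee)(v,v) = 3\dim_{\mathbb R}M$. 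The primarity of $J$ under $H$ is the second half of that proof; the trace terms $\tr|_{L^*}\ad([e^i,e^j])$ and $\tr|_L\ad([e_i,e_j])$ that needed to vanish there vanish here for the same reason, namely unimodularity of $L$ and $L^*$ (Proposition~\ref{prop:unimodularity}), which is exactly what \eqref{eq:divergence} encodes.

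For parts (1) and (2) I would compute $[H_\Lambda f]$ and $[H_\Lambda X]$ directly. Since $H$ is Neveu--Schwarz with $L_{-1} = T$ and its $S$ acting as in Definition~\ref{defn:2.3}, every field has $[H_\Lambda a] = (2T + 2h_a\lambda + \chi S)a + (\text{lower order } \lambda\chi\text{-correction})$ where $h_a$ is its conformal weight; the content of (1)--(2) is the identification of $h_f = 0$, $h_X = 1/2$, and the anomaly $\lambda\chi\,\dive_\mu X$. For $f \in C^\infty(M)$ this follows from Proposition~\ref{prop:universal}(5) and (4): $[H_\Lambda f]$ is built from $[e^i e_i{}_\Lambda f]$-type brackets, which by the non-commutative Wick formula reduce to anchor actions $\pi(e_i)f$ and $S$-derivatives, giving exactly $(2T + \chi S)f$ with no $\lambda$ term, so $f$ has weight $0$. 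For a section $X$, expanding $[H_\Lambda X]$ by quasi-Leibniz produces the weight-$1/2$ term $(2T+\lambda+\chi S)X$ together with a $\lambda\chi$ term whose coefficient is a sum of structure-constant contractions; the point is that this coefficient is precisely $\dive_\mu X = -c_{ij}^i X^j$ (in the frame, using \eqref{eq:divergence}) — this is where the careful bookkeeping of the $Tv$ correction in $H$ and the choice of $\mu$ pays off, and I expect \textbf{this anomaly computation to be the main obstacle}, both because it requires tracking the non-constant structure constants through the quasi-Leibniz and quasi-associativity identities and because the answer is frame-dependent in a way that must be shown to glue to the invariant global statement \eqref{eq:notprimary}. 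I would organize it by first doing the computation in the Kac--Todorov-adapted frame where it reduces to the corresponding identity in Example~\ref{ex:super-currents} (where $a\in\fg$ has weight $1/2$ and is primary under $H_0$), and then adding the $T v_\fh$ deformation of \eqref{eq:hdeformed}, whose effect on $[H_\Lambda X]$ is the extra $\lambda\chi(v_\fh, X)$-type term that supplies the divergence anomaly.
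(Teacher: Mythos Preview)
Your structural intuition is right---the paper itself notes (Remark~\ref{rem:longshot}) that $H$ has the Kac--Todorov form---but you are underestimating what the ``non-constancy check'' entails, and you miss the paper's main shortcut for part~(3).

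The gap is in ``Granting that, part~(3) is essentially immediate.'' The claim that $H_0$ generates $NS_{c_0}$ is proved in Example~\ref{ex:super-currents} only for genuine Lie algebras with \emph{constant} structure constants; in the algebroid setting the $c_{ij}^k \in C^\infty(M)$ are functions, and every application of quasi-associativity or the non-commutative Wick formula produces extra terms involving their derivatives $c_{ij,l}^k$, $c_{ij}^{k,l}$. Verifying that these cancel (using unimodularity via \eqref{eq:divergence} and the algebroid Jacobi identity) is not a bookkeeping afterthought: it \emph{is} the content of the appendix computation, spanning \eqref{eq:cuad1} through \eqref{jac3}. The terms you hope are ``absorbed by the choice of volume form'' are precisely the divergence identities \eqref{eq:div2}--\eqref{eq:div3}, and showing that they suffice requires the full expansion. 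Your own proposal concedes this when it flags the anomaly computation as the main obstacle; the point is that this obstacle sits \emph{before} part~(3), not after it, so you cannot ``grant'' it and proceed.

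The paper's route to part~(3) is also different from yours and more efficient. Rather than proving $[H_\Lambda H]$ satisfies the Neveu--Schwarz relation directly (which your $H_0 + Tv$ decomposition would require, via an algebroid version of the Kac--Todorov computation), the paper first does the long computation of $[e_k{}_\Lambda H]$ to establish part~(2), then obtains $[H_\Lambda J] = (2T+2\lambda+\chi S)J$ from this by one application of the Wick formula (equation~\eqref{tot10}), and finally invokes a short purely algebraic lemma (Lemma~\ref{lem:technical}): if $[J_\Lambda J] = -(H + \tfrac{c}{3}\lambda\chi)$ and $[H_\Lambda J] = (2T+2\lambda+\chi S)J$, then the SUSY Jacobi identity alone forces $[H_\Lambda H] = (2T+3\lambda+\chi S)H + \tfrac{c}{3}\lambda^2\chi$. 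This avoids computing $[H_\Lambda H]$ (or $[H_0{}_\Lambda H_0]$) altogether. Your plan would instead require that separate long computation in the algebroid setting, which is strictly more work than the paper does.
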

	\begin{proof}
		The proof can be found in the Appendix. 
	\end{proof}
\begin{defn} 
		Given a generalized Calabi-Yau manifold $M$, we will say
		that $M$ \emph{has a nice (local) volume form} if we can find a
		(local)
		volume form $\mu$ and (local) dual frames $\{e_i\}$, $\{e^i\}$
		for $L$ and $L^*$ such that \[ \dive_\mu e_i = \dive_\mu
		e^i = 0 \qquad \forall i.\] Using 
		Prop. \ref{prop:unimodularity}, this is equivalent to finding local
		frames such that $ \sum_i [e^i, e_i] = 0. $
		\label{defn:conditionstrange}
	\end{defn}
	\begin{rem} \hfill
		\begin{enumerate}
			\item
	 	Calabi-Yau manifolds and symplectic manifolds admit nice
		volume forms. Moreover, if the generalized complex
		structure is regular (i.e. there is no type jump), then
		$M$ admits a nice volume form. Indeed, using Prop.
		\ref{prop:product} one can find local frames $\{e_i\}$
		and $\{e^i\}$ such that \emph{all structure constant
		vanish.} In this case, the first two terms in the  field
		$H$ in Lemma
		\ref{lem:h.def} vanish.
		We do not know if every generalized Calabi-Yau manifold
		admits a nice local volume form. 

		On the other hand, there are examples\footnote{We owe A.
		Weinstein for an explanation of this point.} of unimodular
		Lie algebroids not admiting local frames
		$\{e_i\}$ with $\dive_\mu e_i = 0 \, \forall i$. 
		
	\item The proof of Theorem \ref{thm:n=2thm} is much simpler in the
		regular case as $H$ would then be quadratic  (see Remark
		\ref{rem:nosabemos}). Moreover, if $M$ admits a nice
		volume form, the proof of this Theorem, while still a
		long computation, would be much simpler.

		In the general case we have to make explicit use of
		unimodularity for $L$ and $L^*$ to find a spinor and
		volume form such that (\ref{eq:divergence}) holds.
	\item It follows from (\ref{eq:notprimary}) that if the manifold
		$M$ admits a local nice volume form, we can find a local
		frame for $E\simeq T\oplus T^*$ consisting of primary
		fields.
	\end{enumerate}
		\label{rem:nosabemos}
	\end{rem}
	\begin{ex}
		Let $M$ be a complex manifold, with complex structure $J$, then we can
		consider the generalized complex structure:
		\begin{equation*}
			\mathcal{J}=  \begin{pmatrix} J & 0 \\ 0 & -J^* \end{pmatrix}.
		\end{equation*}
		The corresponding line bundle $U$ is the canonical bundle of $M$, thus we
		are in the usual Calabi-Yau case. $L = T_{1,0} \oplus T^*_{0,1}$ and choosing
		holomorphic coordinates we can take $e_\alpha = \partial_{z^\alpha}$,
		$e_{\bar\alpha} = d z^{\bar{\alpha}}$. Similarly $e^\alpha = 2 d
		z^\alpha$, $e^{\bar\alpha} = 2 \partial_{z^{\bar{\alpha}}}$.
		Finally, we have a global holomorphic volume form $\Omega$, which satisfies: 
		\begin{equation*}
			\Omega \wedge \overline{\Omega} = \sqrt{\det g} dz^1 \dots dz^N d
			\bar{z}^1 \dots d\bar{z}^N,
		\end{equation*}
		where $g = g_{ij}$ is the K\"ahler metric on $M$. We can
		pick coordinates where the volume form $\Omega \wedge
		\overline{\Omega}$ is constant. In this frame, the fields
		$J$ and $H$ are (note that all Courant brackets vanish):
		\begin{equation*}
			\begin{aligned}
				J &= i dz^\alpha \partial_{z^\alpha}
				- i d z^{\bar\alpha}
				\partial_{z^{\bar{\alpha}}},\\
				H &= \partial_{z^\alpha} T z^\alpha  + d z^{\bar{\alpha}}
				S\partial_{z^{\bar{\alpha}}} + dz^\alpha S
				\partial_{z^\alpha} + \partial_{z^{\bar{\alpha}}} T
				z^{\bar{\alpha}},
			\end{aligned}
		\end{equation*}
		which are the generators of the $N=2$ superconformal structure of
		\cite{malikov}.
		\label{ex:complex}
	\end{ex}
	\begin{ex}
		In the symplectic case we have a generalized complex structure 
		\begin{equation*}
			\mathcal{J} = \begin{pmatrix} 0 & - \omega^{-1} \\ \omega & 0
			\end{pmatrix},
		\end{equation*}
		where $\omega$ is a symplectic form viewed as a map $T \rightarrow T^*$. We
		can use Darboux' Theorem to find coordinates $x^i, y^i$ such that $\omega$
		takes the standard form:
		\begin{equation*}
			\omega = dx^1 \wedge dy^1 + \dots dx^N \wedge
			dy^N.
		\end{equation*}
		In this coordinate system we can take $\{ \partial_{x^i} - i
		d y^i, dx^i - i \partial_{y^i}\}$ as a frame for $L$ and $\{ dx^i + i
		\partial_{y^i}, \partial_{x^i} + i d y^i\}$ as the dual frame.  The supersymmetry
		generators are now
		\begin{equation*}
			\begin{aligned}
				J &= \partial_{x^i} \partial_{y^i} + dx^i dy^i, \\
				H &= dx^i S \partial_{x^i} + dy^i S \partial_{y^i} + Tx^i
				\partial_{x^i} + T y^i \partial_{y^i}, 
			\end{aligned}
		\end{equation*}
		and these are the generators of \cite[Lem. 5.2.]{heluani8}
		\label{ex:symplectic}
	\end{ex}
In general, from the frames $\{e_i\}$ and $\{e^i\}$, we obtain an
orthonormal frame $\{a^i\}$ for the Courant algebroid $E \otimes
\mathbb{C} \simeq (T \oplus
T^*) \otimes \mathbb{C}$ as $a^i = \tfrac{1}{\sqrt{2}} (e^i + e_i)$ for
$i=1, \dots, N=\dim M$ and
$a^i = \tfrac{1}{\sqrt{-2}} (e^{i-N} - e_{i-N})$ for $i=N+1, \dots, 2N$. If
the manifold $M$ admits a nice volume form, then in this frame, the
section $H$ looks like
\begin{equation}
	H = \frac{1}{2} \sum_{i=1}^N a^i Sa^i + \frac{1}{12}
	\sum_{i,j=1}^N [a^i, a^j] (a^i a^j).
	\label{eq:conjecture}
\end{equation}
For a general orientable manifold $M$ with a Courant algebroid $E$
over it, we do not know if the expression (\ref{eq:conjecture}) defines a
global section of $U^\mathrm{ch}(E)$. 

\subsection{Topological twist} 

As with any vertex algebra with an $N=2$ superconformal structure, some
of the Fourier modes of the superfields $H$ and $J$,
 when expanded as in (\ref{eq:fourier}), play a special role.
 The operator
 $L_0 := \tfrac{1}{2}(H_{(1|0)} + J_{(0|1)})$ acts diagonally on $V$ and its
eigenvalues are called the conformal weights of the corresponding states.
These conformal weights are different that the ones we were considering
before, and they correspond to the twisted theory.

Similarly the eigenvalues of $J_0 :=  -i J_{(0|1)}$ are called charge.
Sections of $L$ have conformal weight $1$ and charge $-1$, sections of
$L^*$ have conformal weight $0$ and charge $1$ (this follows for example
from (\ref{eq:a.1.2}) and (\ref{eq:a.1.5}) below), while functions
have conformal weight $0$ and charge $0$. It follows easily that the conformal weight zero part
of $U^\mathrm{ch}(E)$ is just $\bigwedge^\bullet L^*$ and the natural
grading on
$\bigwedge^\bullet L^*$ coincides with the charge gradation on
$U^\mathrm{ch}(E)$.  

There are also two more important Fourier modes: the BRST charge $Q_0$, and the
homotopy operator $G_0$ (we maintain here the notation of
\cite{malikov}). In this context $Q_0 := \tfrac{1}{2}(H_{(0|1)} + i
J_{(0|0)})$ and $G_0 :=  \tfrac{1}{2} (H_{(0|1)} - i J_{(0|0)})$ (these
correspond to the zero modes of the fields $G^\pm$ appearing in
(\ref{eq:decomposeas})). The operator $Q_0$ increases the charge and
squares to zero, and we consider the complex $(U^\mathrm{ch}(E), Q_0)$. 
The operator $G_0$ decreases the charge and also
squares to zero.  In a similar way as in \cite{malikov} we have:
\begin{thm}
	For a generalized Calabi-Yau manifold we have a quasi-isomorphism
	of complexes
	\begin{equation}
		(U_\bullet, \bar\partial) \simeq (\wedge^\bullet L^*,
		d_L)
		\hookrightarrow (U^\mathrm{ch}(E), Q_0)
		\label{eq:quasi-iso}
	\end{equation}
	\label{thm:quasi-iso}
\end{thm}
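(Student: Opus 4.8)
The plan is to establish \eqref{eq:quasi-iso} in two stages, mirroring the strategy of \cite{malikov} but adapted to the algebroid $L$. First I would identify the BRST complex at the level of the sheaf of vertex algebras with a Koszul-type resolution, and then pass to cohomology. The first isomorphism $(U_\bullet,\bar\partial)\simeq(\wedge^\bullet L^*, d_L)$ is already supplied by \eqref{eq:isomalgebroid} together with the discussion following Definition \ref{defn:gcy}: under the trivialization given by a closed pure spinor, Clifford multiplication by sections of $\bar L$ implements the identification, and the splitting $d=\partial+\bar\partial$ matches $d_L$ on $\wedge^\bullet L^*$. So the real content is the quasi-isomorphism $(\wedge^\bullet L^*, d_L)\hookrightarrow (U^\mathrm{ch}(E), Q_0)$.

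Here is how I would proceed. Using the conformal-weight grading induced by $L_0=\tfrac12(H_{(1|0)}+J_{(0|1)})$ discussed just before the theorem, I would filter $U^\mathrm{ch}(E)$ by conformal weight. The weight-zero subspace is exactly $\wedge^\bullet L^*$, with its natural grading matching the charge grading (this is stated in the excerpt, following from \eqref{eq:a.1.2} and \eqref{eq:a.1.5}). The key computational step is to show that $Q_0$ restricted to the weight-zero part acts as $d_L$: this is essentially unwinding the definition $Q_0=\tfrac12(H_{(0|1)}+iJ_{(0|0)})$ against the relations in Proposition \ref{prop:universal}, using $[j(X)_\Lambda j(Y)] = j(X\circ Y)+2\chi i(\langle X,Y\rangle)$ and $[j(X)_\Lambda i(f)] = i(\pi(X)f)$ to reproduce the two sums in \eqref{eq:diffeerentialmult}. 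Then I would run a spectral sequence (or an explicit homotopy-contraction argument, as in \cite{malikov}) with respect to the conformal-weight filtration: the associated graded complex in each positive weight is acyclic because the extra bosonic/fermionic oscillators at positive weight pair up under $Q_0$ (the pairing of sections of $L$ at weight $1$ with their $T$-derivatives, controlled by $G_0$ which provides the contracting homotopy since $[Q_0,G_0]=L_0$ off the weight-zero piece). The homotopy $G_0$ built from $H_{(0|1)}-iJ_{(0|0)}$ and the standard $N=2$ relation $\{Q_0,G_0\} = L_0$ (a consequence of the $\lambda$-brackets in Example \ref{ex:2.11.a}) is what kills all positive-weight cohomology, leaving only $H^\bullet(\wedge^\bullet L^*, d_L) = H^\bullet(L)$.

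To globalize, I would note that all of the above is natural under the transition functions $\rho$ for $L$ — the sections $J$ and $H$ are global by Lemmas \ref{lem:section1} and \ref{lem:h.def}, hence so are $Q_0$, $G_0$, $L_0$ — so the local quasi-isomorphisms glue, and the computation of $H^*(M, \Omega^\mathrm{ch}_M)$-type hypercohomology reduces via a Čech or Leray argument to the sheaf-level statement plus the fact that the $\bar\partial$/$d_L$-cohomology sheaves are the relevant ones.

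\textbf{Main obstacle.} The hardest part will be the acyclicity of the positive-weight pieces of the associated graded: one must show that the \emph{cubic} term in $H$ (the terms $e^i(e^j[e_i,e_j])$ etc.\ in \eqref{eq:h.def}) does not obstruct the contracting homotopy. In the regular case the structure constants can be made to vanish (Remark \ref{rem:nosabemos}) and $H$ is quadratic, so $G_0$ is a genuine oscillator-pairing homotopy and acyclicity is the standard Koszul argument; in the general (type-jumping) case one must argue that the cubic corrections only affect the differential within a fixed conformal weight and therefore do not enter the associated-graded computation — equivalently, that the cubic terms raise conformal weight and so are filtered away. Verifying this weight-counting for the cubic term, together with checking that $G_0$ still satisfies $\{Q_0, G_0\}=L_0$ globally despite $H$ being only locally of the form \eqref{eq:hdefsind}, is where the care is needed; the rest is bookkeeping with the relations of Proposition \ref{prop:universal}.
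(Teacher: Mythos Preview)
Your approach is correct and is essentially the paper's own: use $[G_0,Q_0]=L_0$ as a contracting homotopy off conformal weight zero, identify the weight-zero part with $\wedge^\bullet L^*$, and check that $Q_0$ there is $d_L$ on generators.

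The only thing to flag is that what you call the ``main obstacle'' is not an obstacle at all, and your detour through a spectral sequence and weight-filtering of the cubic terms is unnecessary. The identity $[G_0,Q_0]=L_0$ is a \emph{formal} consequence of the $N=2$ OPEs of Example~\ref{ex:2.11.a}, and Theorem~\ref{thm:n=2thm}(3) has already established that the global sections $H$ and $J$ satisfy those OPEs exactly---cubic terms, type jumps, and all. So $[G_0,Q_0]=L_0$ holds on the nose, globally, with no approximation and no associated-graded argument required: if $Q_0v=0$ and $L_0v=nv$ with $n\neq 0$, then $v=\tfrac{1}{n}Q_0G_0v$ and you are done. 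Likewise there is nothing to check about $H$ ``only locally'' having the form \eqref{eq:hdefsind}: Lemmas~\ref{lem:section1} and~\ref{lem:h.def} already give $J$ and $H$ as global sections, so their Fourier modes $Q_0$, $G_0$, $L_0$ are global operators. The paper's proof is accordingly three lines; the only computation left is the derivation check that $Q_0$ acts as $d_L$ on functions and on the $e^i$, which you correctly identify as a routine unwinding of the relations in Proposition~\ref{prop:universal}.
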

\begin{proof}
	We have $[G_0, Q_0] = L_0$, hence the operator
$G_0$ is an homotopy to zero for the operator $Q_0$ away from conformal
weight $0$. We need to show that $Q_0$ restricted to $\bigwedge^\bullet
L^*$ acts as $d_L$. The question is local in $M$, therefore we may
restrict ourselves to a small neighborhood with dual frames $\{e_i\}$ and
$\{e^i\}$ for $L$ and $L^*$. $Q_0$ being a zero mode of a field is a
derivation of all products in the vertex algebra, therefore it suffices
to check that it acts as $d_L$ on functions and the sections $e^i$. This
last statement is a straightforward computation.
\end{proof}
\begin{rem}\hfill
	\begin{enumerate}
	\item It is well known that the BRST cohomology of a topological
		vertex algebra carries the structure of a
		Gerstenhaber algebra \cite{lian2} (see also \cite{ima1}
		and references therein). In this case
		we see that we recover the Gerstenhaber algebra structure in the
		Lie algebroid cohomology of a generalized Calabi-Yau
		manifold. Indeed the sheaf $U^\mathrm{ch}(E)$ itself can
		be viewed as a sheaf of $G_\infty$-algebras \cite{ima1}.
		\item
	Just as in the usual case, the zero-th Fourier modes above defined make
	sense on any generalized complex manifold, even if the
	superfields $H$ and $J$ are not well defined. We will not pursue
	this further in this article.
\item It is not obvious how to diagonalize $J_0$ on
	$U^\mathrm{ch}(E)$, for example, the local sections \[ J_i = Se_i
	+ \frac{1}{4} c_i^{jk} e_j e_k, \]  have charge zero and
	conformal weight $1$. The BRST charge is the residue of
	the field \[ Q = e^i J_i + \frac{1}{4} e^i (e^j [e_i, e_j]),\]
	and note the similarity of this field with the Chevalley
	differential computing Lie algebra cohomology.
\end{enumerate} 
	\label{rem:pursue}
\end{rem}
The notion of conformal weight does not make sense for
	differential forms nor vector fields. Instead, one has to
	consider the mixed sections $e^i$. This is particularly
	interesting when the generalized Calabi-Yau manifold has
	\emph{type jumps}. 
 	If one considers a Calabi-Yau manifold as in example
	\ref{ex:complex} then, performing the topological twist mentioned
	makes holomorphic forms and antiholomorphic vector fields have
	conformal weight $0$ (the B-model). In the symplectic case
	(A-model) however, for a
	Darboux local system of coordinates as in Example
	\ref{ex:symplectic}  we obtain that the basic fermions of conformal weight
	zero are of the form $dx^i + i \partial_{y^i}$ and
	$\partial_{x^i} + i dy^i$. 
	\section{Concluding remarks}
In this article we produced an embedding of the $N=2$ super vertex
algebra of central charge $c = 3 \dim M$ into the chiral de Rham complex of any generalized Calabi-Yau
manifold $M$. Our approach works without modification in the
\emph{twisted} generalized Calabi-Yau case (i.e. when the exact Courant
algebroid $E$ is not the standard one). 
We discussed the topological twist of this $N=2$ algebra, leading to an
identification of the BRST-cohomology of the chiral de Rham complex of
$M$ with the Lie-algebroid cohomology of the associated Dirac structure.

The formulae for the generators of $N=2$ supersymmetry look like the
generators of \cite{getzler1}. In particular the generators for $N=1$
supersymmetry look like the generators of the Kac-Todorov construction.
Thus, we can view our results as a Courant algebroid generalization of
the Kac-Todorov construction and the results in \cite{getzler1}.  

Similar results hold in the Generalized K\"ahler and Generalized
Calabi-Yau metric cases. In particular it is possible to show that in the
latter case there are two commuting sets of $N=2$ structures. Taking
BRST cohomology with respect to the \emph{left charge} we obtain a sheaf
with finite dimensional cohomologies (in each conformal weight) and with a surviving topological
structure (given by the \emph{right $N=2$ structure}). This allows us to
define the \emph{Elliptic genus} of a generalized Calabi-Yau metric
manifold just as in the usual case \cite{borisovlibgober}. We plan to
return to this matters in the future.

	\appendix

	\section{Proofs of the main results}

	\begin{proof}[Proof of Lemma \ref{lem:h.def}]
		1) follows from 2). The proof of (2) is similar to the
		proof of Prop \ref{prop:doubles}, the major difficulty is
		that the ``structure constants'' defined  by
		\[
		\begin{aligned}
			c^{i}_{jk} &:= \langle e^i, [e_j, e_k] \rangle =
			\langle [e^i, e_j], e_k \rangle \in C(M), \\ 
			c^{ij}_k &:= \langle [e^i, e^j], e_k \rangle =
			\langle e^i, [e^j, e_k] \rangle \in C(M),
		\end{aligned}
		\] are functions, therefore we need to keep track of
		quasi-associativity terms. Note that we have also used
		axiom (5) in Def. \ref{defn:1}.
	In order to compute $[J_\Lambda J]$ we start with:
	\begin{equation}
		\begin{aligned}
			{[e_j}_\Lambda J] &= \frac{i}{2} \left({[e_j}, e^i] + 2
			\chi {\delta_j}^i  \right) e_i + \frac{i}{2} e^i
			{[e_j},e_i]  +
			i \int_0^\Lambda  \eta \, c^i_{ij} d\Gamma, \\ &=
			\frac{i}{2} \left( (c_j^{ik} e_k) e_i - (c_{jk}^i
			e^k) e_i) + e^i (c_{ji}^k e_k)\right)
			 +i \chi e_j + i 
			 \lambda  c_{ij}^i, \\ &=
			\frac{i}{2} \left( c_j^{ik} (e_k e_i) - c_{jk}^i
			(e^k e_i) + c_{ji}^k (e^ie_k)\right)
			 +i \chi e_j + i
			 \lambda  c_{ij}^i - i Tc^{i}_{ji}, \\ &= \frac{i}{2} c^{ik}_j
			 (e_k e_i) + i \chi e_j + i (\lambda + T)
			 c^{i}_{ij}.
		\end{aligned}
		\label{eq:a.1.1}
	\end{equation}
	 By skew-symmetry we obtain:
	\begin{equation}
			{[J}_\Lambda e_j] = \frac{i}{2} c^{ik}_j (e_k
			e_i) -i (\chi + S) e_j - i
			 \lambda   c^{i}_{ij}.
		\label{eq:a.1.2}
	\end{equation}
	Similarly we have
	\begin{equation}
		\begin{aligned}
			{[e^j}_\Lambda J] &= \frac{i}{2} {[e^j}, e^i]
			 e_i + \frac{i}{2} e^i
			 \left( {[e^j},e_i] + 2 \chi {\delta^j}_i \right) +  
			i \int_0^\Lambda  \eta \, c^{ji}_i d\Gamma, \\ &=
			\frac{i}{2} \left( (c^{ji}_k e^k) e_i +
			e^i (c^{j}_{ik}  e^k) - e^i (c^{jk}_i  e_k)\right)
			-   i \chi e^j + i
			\lambda c^{ji}_i,  \\ &=
			\frac{i}{2} \left( c^{ji}_k (e^k e_i) +
			c^{j}_{ik} (e^i e^k) - c^{jk}_i (e^i e_k)\right)
			-   i \chi e^j + i
			(\lambda + T) c^{ji}_i,  \\ &= \frac{i}{2} c^j_{ik} (e^i
			e^k) - i \chi e^j + i (\lambda + T) c^{ji}_i,
		\end{aligned}
		\label{eq:a.1.4}
	\end{equation}
	and by skew-symmetry we obtain:
	\begin{equation}
			{[J}_\Lambda e^j] = \frac{i}{2} c^j_{ik} (e^i
			e^k) + i (\chi + S) e^j - i
			 \lambda   c^{ji}_i.
		\label{eq:a.1.5}
	\end{equation}
	From (\ref{eq:a.1.2}) and (\ref{eq:a.1.5}) we obtain:
	\begin{multline}
		{[J}_\Lambda J] = -\frac{1}{4} \left(
		c^j_{ik} (e^i e^k) + 2 (\chi + S)
		e^j - 2 \lambda  c^{ji}_i \right) e_j + \\
		\frac{1}{4} e^j \left(
	c^{ik}_j (e_k e_i) - 2 (\chi +S) e_j - 2 \lambda 
	c^i_{ij} \right) - \frac{1}{4}  \int_0^\Lambda [c^j_{ik} (e^i
	e^k)_\Gamma e_j]
	d\Gamma - \\ - \frac{1}{2} \int_0^\Lambda (\eta - \chi) ([e^j,
	e_j] + 2 \eta \dim M ) d\Gamma.
	\label{eq:complicada}
\end{multline}
We can compute the integral term easily as:
\begin{equation*}
	\lambda  c^i_{ij} e^j  - \frac{1}{2} \lambda  [e^j,
	 e_j] - \lambda \chi \dim M,
	\end{equation*}
and replacing in (\ref{eq:complicada}) using quasi-associativity we obtain:
\begin{multline}
	{[J}_\Lambda J] =  -\frac{1}{4} \bigl( e^i (e^k [e_i, e_k])
	+ e_i (e_k [e^i, e^k]) \bigr) - \frac{1}{2} \bigl(e_j Se^j + e^j Se_j
	\bigr) \\  - \frac{1}{2} (\lambda + T) [e^j, e_j]  + 
	\frac{1}{2} \lambda (c^{ji}_i e_j + c^i_{ij} e^j) -
	\lambda \chi \dim M  + T (c^i_{ij} e^j) \\ = -\frac{1}{4} \bigl( e^i (e^k [e_i, e_k])
	+ e_i (e_k [e^i, e^k]) \bigr) - \frac{1}{2} \bigl(e_j Se^j + e^j Se_j
	\bigr) + \\ + \frac{1}{2} T( c^i_{ij} e^j + c^{ij}_i e_j ) -
	\lambda \chi \dim M
\end{multline}

	\end{proof}
	\begin{proof}[Proof of Theorem \ref{thm:n=2thm}] 1) is a straighforward computation, we leave it as
		an excercise for the reader. We first prove 2) for the
		sections $e_k$.
		For this we need:
		\begin{multline}
			{[e_k}_\Lambda e^i Se_i + e_i Se^i] = (c_{k}^{ij}
			e_j - c_{kj}^i e^j)Se_i + 2 \chi Se_k + (c_{ki}^j
			e_j) Se^i + e^i (S + \chi) (c^j_{ki} e_j) + \\ e_i
			(S + \chi) (c_{k}^{ij} e_j - c_{kj}^i e^j + 2
			\delta_k^i \chi) + \\ \int_0^\Lambda [{c_{k}^{ij}
			e_j - c_{kj}^i e^j}_\Gamma Se_i]  d\Gamma +
			\int_0^\Lambda [{c^j_{ki} e_j}_\Gamma Se^i]
			d\Gamma.
			\label{eq:cuad1}
		\end{multline}
		The constant term in (\ref{eq:cuad1}) is given by
		\begin{multline}
			(c_{k}^{ij}
			e_j - c_{kj}^i e^j)Se_i + (c_{ki}^j e_j) Se^i +
			e^i (c^j_{ki} Se_j) + e_i (c^{ij}_k Se_j) - e_i
			(c^{i}_{kj} Se^j) + \\ e^i \left(  (Sc^j_{ki}) e_j
			\right) + e_i \left( (S c^{ij}_k) e_j -
			(Sc^{i}_{kj}) e^j\right) = (c^{ij}_k e_j -
			c^{i}_{kj} e^j + c^{ji}_k e_j+ c^{i}_{kj} e^j) Se_i + \\ (T e_i)
			c^{ij}_{k,j} - (T c^{ij}_k) (c_{ij}^l e_l) +
			(Te^i) c^{j}_{ki, j} - (T c^j_{ki}) c^i_{jl}e^l +
			(T c^j_{ki}) c^{il}_j e_l + \\
			(c^j_{ki} e_j - c^{j}_{ki} e_j)
			Se^i - (T e_i) c^{i,j}_{kj} + (Tc^{i}_{kj})
			c_{i}^{jl} e_l - (Tc^{i}_{kj}) c_{il}^j e^l + \\e^i \left(  (Sc^j_{ki}) e_j
			\right) + e_i \left( (S c^{ij}_k) e_j -
			(Sc^{i}_{kj}) e^j\right) = (Te_i) (c^{ij}_{k,j} -
			c^{i,j}_{kj} ) + (Te^i) c^j_{ki,j} + \\ \left(2 (Tc^j_{ki})
			c^{il}_j - (Tc^{ij}_k) c^l_{ij} \right) e_l - 2
			(T c^{j}_{ki}) c^{i}_{jl} e^l +\\
			\frac{1}{2} e^i \left( (c^j_{ki,l} e^l) e_j
			+
			(c^{j,l}_{ki} e_l) e_j\right) +
			\frac{1}{2} e_i \left(
			(c^{ij}_{k,l} e^l) e_j + (c^{ij,l}_k e_l) e_j -
			(c^{i,l}_{kj} e_l) e^j - (c^{i}_{kj,l} e^l) e^j
			\right) = \\ (Te_i) (c^{ij}_{k,j} -
			c^{i,j}_{kj} ) + (Te^i) c^j_{ki,j} +  \left(2 (Tc^j_{ki})
			c^{il}_j - (Tc^{ij}_k) c^l_{ij} \right) e_l - 2
			(T c^{j}_{ki}) c^{i}_{jl} e^l +\\
			+ \frac{1}{2} c^{ij,l}_k e_i e_l e_j +
			\frac{1}{2} c^{j}_{ki, l} (e^i (e^l e_j)) +
			e^i T(c^j_{ki,j}) - \frac{1}{2} c_{kj,l}^i (e^l
			(e^j e_i)) +\frac{1}{2} c^{j,l}_{ki} (e^i (e_l
			e_j)) - \\ \frac{1}{2} c^{ij}_{k,l} (e^l(e_ie_j))
			+ e_i (Tc^{ij}_{k,j}) 
			 -
			\frac{1}{2} c^{i,l}_{kj} (e^j (e_i e_l)) - e_i (T
			c^{i,j}_{kj}) = \\ \frac{1}{2} c^{ij,l}_k e_i e_l e_j +
			c^{j}_{ki,l} (e^i (e^l e_j)) -
			\frac{1}{2} (c^{ij}_{k,l} -2 
			c_{kl}^{j,i} ) (e^l (e_i e_j)) 
			 +
			 (Te_i) (c^{ij}_{k,j} -
			 c^{i,j}_{kj}
			 ) + \\ (Te^i) c^j_{ki,j} + \left( T
			c^j_{kl, j} - 2 c^i_{jl} (Tc^j_{ki}) 
			\right) e^l + \left(  2(Tc^j_{ki}) c^{il}_j
			-(Tc^{ij}_k) c^l_{ij} + T c^{lj}_{k,j} -
			Tc^{l,j}_{kj} \right) e_l,
			\label{eq:cuad2}
		\end{multline}
		where for a function $f$, we use the notation
		\[ f_{,i} := \pi(e_i) f, \qquad f^{,i} := \pi(e^i) f.\]
		The $\chi$ term in \ref{eq:cuad1} is simply:
		\begin{equation} 2 Se_k - e^i (c^j_{ki} e_j) - e_i (c^{ij}_ke_j - c^i_{kj}
		e^j). \label{eq:cuad3}
		\end{equation}
	To compute the $\lambda$ term we need to evaluate the
	integral terms in (\ref{eq:cuad1}). For this we compute:
	\begin{equation}
		\int_0^\Lambda [{c_{k}^{ij}
			e_j - c_{kj}^i e^j}_\Gamma Se_i]  d\Gamma  =
			\int_0^\Lambda (S + \eta) [ [e_k, e^i]_\Gamma
			e_i] d\Gamma =  \lambda [ [e_k, e^i], e_i] + 
			\lambda S c_{ki}^i,
		\label{eq:cuad4}
	\end{equation}
	and, 
	\begin{equation}
		\int_0^\Lambda {[[c_{ki}^j e_j}_\Gamma Se^i] d\Gamma =
		\int_0^\Lambda (S + \eta) [ [e_k, e_i]_\Gamma e^i] d
		\Gamma = \lambda [ [e_k, e_i], e^i] - \lambda S c_{ki}^i,
		\label{eq:cuad5}
	\end{equation}
	from where the $\lambda$ term of (\ref{eq:cuad1}) is just
	\begin{equation}
		2 \lambda e_k + [ [e_k, e^i], e_i] + [ [e_k, e_i], e^i].
		\label{eq:cuad6}
	\end{equation}
	We also need to compute
	\begin{multline}
		{[e_k}_\Lambda e^i (e^j [e_i, e_j] )] = [e_k, e^i] (e^j
		[e_i, e_j]) + 2 \chi e^j [e_k, e_j] + e^i \left( [e_k,
		e^j] [e_i,e_j] + 2\chi [e_i, e_k] +\right. \\\left. e^j [e_k, [e_i, e_j]
		] + 2 \lambda \langle [e_k, e^j], [e_i, e_j] \rangle
		\right) + 2 \lambda c_k^{ij} [e_i, e_j] - 2 \lambda e^j
		\langle [e_k, e^i], [e_i, e_j] \rangle = \\ [e_k, e^i]
		(e^j [e_i, e_j]) + e^i ( [e_k, e^j] [e_i, e_j] ) + e^i
		(e^j [e_k, [e_i, e_j]]) + 4 \chi e^i[e_k, e_i] + \\ \lambda
		\left( 4 \langle [e_k, e^j], [e_i, e_j] \rangle e^i + 2
		c^{ij}_k c_{ij}^l e_l \right).
		\label{qub1}
	\end{multline}
	Similarly:
\begin{multline}
	{[e_k}_\Lambda e_i (e_j [e^i, e^j])] = [e_k, e_i] (e_j [e^i,
	e^j]) + e_i \left( [e_k, e_j] [e^i, e^j] + e_j [e_k, [e^i, e^j]]
	+ \right. \\ \left. e_j (2 \chi + S) c_k^{ij} + 2 \lambda \langle
	[e_k, e_j], [e^i, e^j] \rangle \right) - 2 \lambda e_j \langle
	[e_k, e_i], [e^i, e^j] \rangle = \\ [e_k, e_i] (e_j [e^i, e^j]) +
	e_i ([e_k, e_j] [e^i, e^j])) + e_i (e_j [e_k, [e^i, e^j]]) + e_i
	(e_j S
	c^{ij}_k) + \\ 2 \chi e_i (e_j c^{ij}_k) + 4 \lambda \langle [e_k,
	e_j], [e^i, e^j]\rangle e_i.
	\label{qub2}
\end{multline}
 Finally we need 
\begin{multline}
	{[e_k}_\Lambda - i T \mathcal{J} [e^i, e_i]] =  (\lambda + T)
	{[e_k}_\Lambda c^{i}_{ji} e^j + c^{ji}_i e_j] = \\ (\lambda + T)
	\left( c^{i}_{ji,k} e^j + c^{i}_{ji} c_k^{jl} e_l - 
	 c^i_{ji} c_{kl}^j
	e^l 
	+ c^{ji}_{i,k} e_j + c^{ji}_i c_{kj}^l e_l  + 2\chi
	c^i_{ki}\right).
	\label{div1}
\end{multline}
It follows from (\ref{eq:cuad3}) (\ref{qub1}), (\ref{qub2}) and
(\ref{div1}) that the $\chi$ term of ${[e_k}_\Lambda H]$ is given by
\begin{multline}
	S e_k - \frac{1}{2} e^i(c^j_{ki} e_j) - \frac{1}{2} e_i (c^{ij}_k
	e_j ) + \frac{1}{2} e_i (c^{i}_{kj} e^j) + e^i (c_{ki}^j e_j) +
	\\ \frac{1}{2} e_i (e_j c^{ij}_k) + T c^{i}_{ki} = Se_k +
	\frac{1}{2} e^i (c_{ki}^j e_j) + \frac{1}{2} e_i (c^i_{kj} e^j) +
	Tc^i_{ki} = Se_k + Tc^{i}_{ki}.
	\label{eq:tot1}
\end{multline}
We have for the $\lambda$ term:
\begin{multline}
	e_k + \frac{1}{2} [ [e_k, e^i], e_i] + \frac{1}{2} [ [e_k, e_i],
	e^i] +  \langle [e_k, e^j], [e_i, e_j] \rangle e^i + 
	\frac{1}{2} c^{ij}_k c^l_{ij} e_l + \\ \langle [e_k, e_j], [e^i,
	e^j] \rangle e_i + \frac{1}{2} \left(c^{i}_{ji,k} e^j + c^{i}_{ji} c_k^{jl} e_l - 
	 c^i_{ji} c_{kl}^j
	e^l 
	+ c^{ji}_{i,k} e_j + c^{ji}_i c_{kj}^l e_l \right) = \\ e_k +
	\frac{1}{2} [c_k^{ij} e_j, e_i] - \frac{1}{2} [c_{kj}^i e^j, e_i]
	+ \frac{1}{2} [c_{ki}^j e_j, e^i] - c_{kl}^j c_{ij}^l e^i +
	\frac{1}{2} c^{ij}_k c^l_{ij} e_l + c_{kj}^l c^{ij}_l e_i + \\
	\frac{1}{2} \left(c^{i}_{ji,k} e^j + c^{i}_{ji} c_k^{jl} e_l - 
	 c^i_{ji} c_{kl}^j
	e^l 
	+ c^{ji}_{i,k} e_j + c^{ji}_i c_{kj}^l e_l \right)= e_k -
	\frac{1}{2} c^{ij}_{k,i} e_j - \frac{1}{2} c^{ij}_k c_{ij}^l e_l
	+ \\ \frac{1}{2} c^{i}_{kj,i} e^j + \frac{1}{2} c^{i}_{kj}
	c_i^{jl} e_l - \frac{1}{2} c^{i}_{kj} c_{il}^j e^l -
	\frac{1}{4} c^{i}_{ki, j} e^j - \frac{1}{4} c^{i,j}_{ki} e_j -
	\frac{1}{2} c^{j, i}_{ki} e_j - \\ \frac{1}{2} c^j_{ki} c^i_{jl} e^l
	+ \frac{1}{2} c^j_{ki} c^{il}_{j} e_l + \frac{1}{4} c^{i}_{ki, j}
	e^j + \frac{1}{4} c^{i,j}_{ki} e_j - c_{kl}^j c_{ij}^l e^i +
	\frac{1}{2} c^{ij}_k c^l_{ij} e_l + c_{kj}^l c^{ij}_l e_i + \\
	\frac{1}{2} \left(c^{i}_{ji,k} e^j + c^{i}_{ji} c_k^{jl} e_l - 
	 c^i_{ji} c_{kl}^j
	e^l 
	+ c^{ji}_{i,k} e_j + c^{ji}_i c_{kj}^l e_l \right)= e_k - \\ 
	\frac{1}{2} (c^{ij}_{k,i} + c^{j,i}_{ki} -
	c^{ji}_{i,k} - c^{i}_{li} c^{lj}_k - c^{li}_{i} c^{j}_{kl} ) e_j
	+ \frac{1}{2} (c^i_{kj,i}  + c^i_{ji,k} -
	c^i_{li}c^l_{kj} ) e^j.
	\label{tot2}
\end{multline}
To simplify this expression further, recall from (\ref{eq:divergence})
and its dual that we have $\dive_\mu e_k = -c_{ki}^i$ and
$\dive_\mu e^j = -c^{ji}_i$. A simple computation shows
\begin{multline}
	\dive_\mu [e_k, e_j] = \dive_\mu (c_{kj}^i e_i) = c_{kj,i}^i -
	c_{kj}^i c_{il}^l = \dive_\mu \pi [e_k, e_j] = \\ \dive_\mu [\pi e_k,
	\pi e_j] = \pi(e_k) \dive_\mu e_j - \pi(e_j) \dive_\mu e_k =
	- c_{ji,k}^i  + c_{ki,j}^i.
	\label{eq:div2}
\end{multline}
A similar computation shows
\begin{equation}
	\dive_\mu [e_k, e^j] = c_l^{il} c_{ki}^j - c^{ji}_k c^{l}_{il} -
	c_{ki}^{j,i} + c_{k,i}^{ji} = - c^{ji}_{i,k} + c_{ki}^{i,j}.
	\label{eq:div3}
\end{equation}
Replacing (\ref{eq:div3}) and (\ref{eq:div2}) in (\ref{tot2}) we obtain
for the $\lambda$ term of ${[e_k}_\Lambda, H]$:
\begin{equation}
	e_k +\frac{1}{2} c^{i,j}_{ki} e_j +  \frac{1}{2} c_{ki,j}^i e^j =
	e_k + S c_{ki}^i.
	\label{eq:tot3}
\end{equation}
From (\ref{div1}) we find that the $\lambda \chi$ term in
${[e_k}_\Lambda H]$ is simply $c_{ki}^i$ and we need to compute only
the constant term. For this we expand the terms in (\ref{qub1}) which are
cubic in the fermions using quasi-asociativity, a straightforward
computation shows:
\begin{equation}
	\begin{aligned}
		{[e_k}, e^i] (e^j [e_i, e_j]) &= c^{im}_{k} c^{l}_{ij} (e_m
		((e^j e_l)) - c^{i}_{km} c^{l}_{ij} (e^m (e^j e_l)) + \\ &
		\quad +  2
		(Tc^{ij}_k) c^l_{ij} e_l + 2 (Tc^{i}_{kl}) c^l_{ij} e^j, \\
		e^i ([e_k,e^j][e_i, e_j]) &= c^{jl}_k c^m_{ij} (e^i (e_l
		e_m)) - c^j_{kl} c^m_{ij} (e^i (e^l e_m)) - 2 (Tc^j_{kl})
		c^l_{ij} e^i, \\
		e^i (e^j [e_k, [e_i, e_j]]) &= c^l_{ij,k} (e^i (e^j e_l)) +
		c^l_{ij} c^m_{kl} (e^i (e^j e_m)), \\ 
		[e_k, e_i] (e_j[e^i, e^j]) &= c^l_{ki} c^{ij}_m (e_l(e_j
		e^m)) - 2 (Tc^{l}_{ki}) c^{ij}_l e_j, \\
		e_i ([e_k, e_j][e^i, e^j]) &= c^{ij}_m c^{l}_{kj} (e_i
		(e_l e^m)) + 2 (T c^m_{kj}) c_m^{ij} e_i, \\
		e_i (e_j [e_k,[e^i, e^j]]) &= c^{ij}_{l,k} (e_i (e_j e^l))
		- c^{ij}_l c^l_{km} (e_i (e_j e^m)) + c^{ij}_l c^{lm}_k
		(e_i (e_j e_m))\\ 
		&\quad - \frac{1}{2} c^{ij}_{k,l} (e_i (e_j e^l)) -
		\frac{1}{2} c^{ij,l}_k (e_i (e_j e_l)), \\
		e_i (e_j Sc_k^{ij}) &= \frac{1}{2} c^{ij}_{k,l} (e_i (e_j
		e^l)) + \frac{1}{2} c^{ij,l}_{k} (e_i (e_j e_l)).
	\label{eq:tot4}
\end{aligned}
\end{equation}
Collecting all
the terms of the constant term of ${[e_k}_\Lambda H]$ that do not
explicitely contain cubic products of sections of 
$T\oplus T^*$ we get:
\begin{multline}
	\frac{1}{2} (Te_i) \left( c^{ij}_{k,j} -  c^{i,j}_{kj} 
	 \right) + \frac{1}{2} (Te^i) c^j_{ki,j} +
	\frac{1}{2} \left( T
			c^j_{kl, j} - 2 c^i_{jl} (Tc^j_{ki}) 
			\right) e^l + \\ \frac{1}{2}  \left(  2(Tc^j_{ki})
			c^{il}_j
			-(Tc^{ij}_k) c^l_{ij} + T c^{lj}_{k,j} -
			Tc^{l,j}_{kj} \right) e_l + \frac{1}{2} T \left( c^{i}_{ji,k} e^j + c^{i}_{ji} c_k^{jl} e_l - \right. \\
	\left. c^i_{ji} c_{kl}^j
	e^l 
	+ c^{ji}_{i,k} e_j + c^{ji}_i c_{kj}^l e_l \right) +
	\frac{1}{2} (Tc^{ij}_k) c^l_{ij} e_l + \frac{1}{2} (Tc^{i}_{kl})
	c^l_{ij} e^j - \frac{1}{2} (Tc^j_{kl})
	c^l_{ij} e^i - \\ \frac{1}{2} (Tc^l_{ki}) c^{ij}_l e_j +
	\frac{1}{2} (Tc^m_{kj}) c_m^{ij} e_i =   
			  \frac{1}{2} T \left(
			\Bigl( c^{i}_{ji,k}  + c^i_{kj, i}  
			 - c^i_{li} c_{kj}^l \Bigr) e^j \right) +\\
			 \frac{1}{2} T \left( \Bigl(
          c^{i}_{li} c_k^{lj}   
	+ c^{ji}_{i,k}  + c^{li}_i c_{kl}^j + c^{ji}_{k,i} -
			c^{j,i}_{ki}\Bigr) e_j \right) =  
			  \frac{1}{2} T \left(
			  c^{i}_{ki,j} e^j \right) +
			  \frac{1}{2} T \left( c^{i,j}_{ki} e_j \right) =
			   TS c_{ki}^i,
\label{tot5}
\end{multline}
where we have used (\ref{eq:div2}) and (\ref{eq:div3}) in the last line.
 Collecting all the terms that contain cubic products of sections of $L$
 we get:
 \begin{equation}
	 \frac{1}{4} c^{ij,l}_k (e_i (e_l e_j)) 
	  + \frac{1}{4} c^{ij}_l c^{lm}_k (e_i (e_j e_m))
	 =
	 \frac{1}{4} \left( c^{ij}_m c^{ml}_k 
	 - c^{ij,l}_k \right) (e_i (e_j e_l))
	 = 0,
 \end{equation}
 which vanishes because of the Jacobi identity.
Collecting all the terms that are quadratic in sections of $L$ and linear
in $L^*$ we get:
\begin{multline}
	\Bigl(-\frac{1}{4} c^{ij}_{k,l}
	+\frac{1}{2} c^{j,i}_{kl} \Bigr) (e^l (e_i e_j)) + \frac{1}{4} c^{im}_k
	c^l_{ij} (e_m (e^j e_l)) +  \frac{1}{4} c_k^{jl}c_{ij}^m (e^i(e_l
	e_m)) + \\ \frac{1}{4} c^{ij}_m c^l_{kj} (e_i (e_l e^m)) +
	\frac{1}{4} c^{ij}_{l,k} (e_i (e_je^l)) -\frac{1}{4} c^{ij}_l c^l_{km} (e_i
	(e_j e^m)) + 
	\frac{1}{4} c^{l}_{ki} c^{ij}_m (e_l (e_j e^m))= \\
	\Biggl( -\frac{1}{4} c^{ij}_{k,l}
	 +  \frac{1}{2} c_k^{mi}c_{lm}^j + \frac{1}{4} c^{im}_l
	c^j_{km}  +
	\frac{1}{4} c^{ij}_{l,k}  -\frac{1}{4} c^{ij}_m c^m_{kl} 
	+ \frac{1}{2} c^{j,i}_{kl} +
	\frac{1}{4} c^{i}_{km} c^{mj}_l \Biggr) (e_i (e_j e^l))
	=0.
\end{multline}
Finally, collecting all the terms that are quadratic in sections of
$L^*$ and linear in $L$:
\begin{multline}
	\frac{1}{2} c^j_{ki,l} (e^i (e^l e_j)) - \frac{1}{4} c^i_{km}
	c^{l}_{ij} (e^m (e^je_l)) - \frac{1}{4} c_{kl}^j c^m_{ij} (e^i
	(e^l e_m)) + \\ \frac{1}{4} c^l_{ij,k} (e^i (e^j e_l)) +
	\frac{1}{4} c^l_{ij} c^m_{kl} (e^i (e^j e_m)) = \\
	\frac{1}{4} \Bigl( 2 c^l_{ki,j}  -  c^m_{ki}
	c^{l}_{mj} -  c_{kj}^m c^l_{im} 
 	+  c^l_{ij,k}  +
	 c^m_{ij} c^l_{km} \Bigr) (e^i (e^j e_l)),
	\label{jac3}
\end{multline}
which vanishes because of the Jacobi identity.

It follows from (\ref{eq:tot1}), (\ref{eq:tot3}) and
(\ref{tot5})-(\ref{jac3}) that we have
\begin{equation}
	{[e_k}_\Lambda H] = TS c_{ki}^i + \chi (Se_k + Tc_{ki}^i) +
	\lambda (e_k + Sc_{ki}^i) + \lambda \chi c_{ki}^i.
	\label{eq:tot7}
\end{equation}
Using skew-symmetry we obtain:
\begin{equation}
	{[H}_\Lambda e_k] = (2T + \lambda + \chi S) e_k - \lambda \chi
	c_{ki}^i.
	\label{eq:tot8}
\end{equation}
Similarly we find
\begin{equation}
	{[H}_\Lambda e^k] = (2T + \lambda + \chi S) e^k - \lambda \chi
	c^{ki}_i.
	\label{eq:tot9}
\end{equation}
Which proves 2) for the sections $e_k$ and $e^k$. To find 2) for a
general section we just use the non-commutative Wick formula and 1). 

3) We use the non-commutative Wick formula to obtain:
\begin{multline}
	[H_\Lambda e^k e_k] =  \Bigl((2T + \lambda + \chi S)
	e^k\Bigr) e_k -
	\lambda\chi c^{ki}_i e_k + e^k (2 T + \lambda + \chi S) e_k + \\
	\lambda \chi e^k c_{ki}^i  + \int_0^\Lambda (-2\gamma + \lambda
	-\chi \eta) ([e^k, e_k] + 2 \eta \dim M) d\Gamma = \\ (2T +
	2\lambda + \chi S) e^k e_k .
	\label{tot10}
\end{multline}
The Theorem follows easily from (\ref{tot10}) and the 
following simple Lemma that is
interesting on its own: 
\begin{lem}
	Let $V$ be an $N_K=1$ SUSY vertex algebra. Let $J$ and $H$ be two fields of
	$V$ satisfying\footnote{Note that we
		cannot say that $J$ is a primary field of conformal weight $1$ from this
		equation since we do not know yet that $H$ is a superconformal field.}:
	\begin{equation*}
		[J_\Lambda J] = - \left( H + \frac{c}{3} \lambda \chi \right),
		\qquad [H_\Lambda J] = (2 T + 2\lambda + \chi S) J,
	\end{equation*}
	for some complex number $c$. Then the fields $J$ and $H$ generate an $N=2$
	super vertex algebra of central charge $c$, namely:
	\begin{equation*}
		[H_\Lambda H] = (2 T + 3\lambda + \chi S) H + \frac{c}{3} \lambda^2
		\chi.
	\end{equation*}
	\label{lem:technical}
\end{lem}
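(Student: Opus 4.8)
The plan is to derive the missing bracket $[H_\Lambda H]$ formally from the two hypotheses, using only the axioms of an $N_K=1$ SUSY Lie conformal algebra---sesquilinearity, skew-symmetry, and the Jacobi identity of Definition \ref{defn:k.conformal.1}---together with the relations $S^2 = T$ and $\chi^2 = -\lambda$ in $\cH$. The key observation is that the relation $[J_\Lambda J] = -(H + \tfrac{c}{3}\lambda\chi)$ exhibits $H$ as (minus) the $\Lambda^{0|0}$-coefficient of $[J_\Lambda J]$; concretely, $H = -[J_\Gamma J] - \tfrac{c}{3}\gamma\eta$ for a second family of variables $\Gamma = (\gamma,\eta)$, the right-hand side being in fact independent of $\Gamma$. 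Hence every bracket involving $H$ reduces, via Jacobi, to brackets involving only $J$, and those are prescribed by the hypotheses; in particular the Lie conformal algebra generated by $J$ and $H$ is forced to be the $N=2$ one of Example \ref{ex:2.11.a}.

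Concretely, I would compute
\[ [H_\Lambda H] = -[H_\Lambda [J_\Gamma J]] = -[[H_\Lambda J]_{\Gamma + \Lambda} J] - [J_\Gamma [H_\Lambda J]], \]
the second equality being the Jacobi identity applied to the triple $(H,J,J)$ (with $|H|=1$, $|J|=0$). Now substitute the hypothesis $[H_\Lambda J] = (2T + 2\lambda + \chi S)J$ into both terms on the right and use sesquilinearity to move the operators $T$ and $S$ through the $\Lambda$-bracket---so that $[(TJ)_M J] = -m\,[J_M J]$, $[(SJ)_M J] = \mu\,[J_M J]$, $[J_M(TJ)] = (T+m)[J_M J]$, $[J_M (SJ)] = -(S+\mu)[J_M J]$ for $M=(m,\mu)$---so that each term becomes an $\cH$-linear combination of $[J_M J]$ for suitable $M$, which by hypothesis equals $-H - \tfrac{c}{3}m\mu$. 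Since the left-hand side is manifestly independent of $\Gamma$, the $\Gamma$-dependence on the right must cancel; one may either track this cancellation explicitly or simply specialize $\Gamma$ at the end. Collecting the surviving terms---here the identity $\chi^2 = -\lambda$ is essential---yields $[H_\Lambda H] = (2T + 3\lambda + \chi S)H + \tfrac{c}{3}\lambda^2\chi$. As an independent check one may also deduce $[J_\Lambda H] = -(T + 2\lambda + \chi S)J$ from the hypothesis on $[H_\Lambda J]$ by skew-symmetry and verify consistency.

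I expect the only real difficulty to be sign bookkeeping: the $\nabla = (T,S)$-shift implicit in skew-symmetry and in the clause ``computed as in Skew-Symmetry'' governing the middle term of the Jacobi identity, the signs incurred when pulling the odd variable $\chi$ past a $\Lambda$-bracket, and the fact that $S$ does not act as a commutator on the associated fields (Remark \ref{rem:caca5}). In particular the central term $\tfrac{c}{3}\lambda^2\chi$ survives only after several of the $\tfrac{c}{3}\lambda\chi$-type contributions cancel, so one must be careful not to discard it; a useful consistency check is that the part of $[H_\Lambda H]$ free of $T$, $S$ and of the central term must reproduce the Neveu-Schwarz bracket of Example \ref{ex:2.9}.
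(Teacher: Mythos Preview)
Your proposal is correct and follows essentially the same route as the paper: write $[H_\Lambda H] = -[H_\Lambda[J_\Gamma J]]$, apply the Jacobi identity for the triple $(H,J,J)$ to get $-[[H_\Lambda J]_{\Lambda+\Gamma}J] - [J_\Gamma[H_\Lambda J]]$, substitute the hypothesis, and use sesquilinearity together with $\chi^2=-\lambda$ to collapse everything to $(2T+3\lambda+\chi S)H + \tfrac{c}{3}\lambda^2\chi$. The paper carries out exactly this computation in a single displayed chain, tracking the $\Gamma$-cancellation explicitly rather than specializing at the end.
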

\begin{proof}
	This is a direct application of the Jacobi identity for $N_K=1$ SUSY Lie
	conformal algebras \cite{heluani3}:
	\begin{multline*}
		 [H_\Lambda H] = - [H_\Lambda [J_\Gamma J]] =- [ [H_\Lambda
		J]_{\Lambda + \Gamma} J] - [J_\Gamma [H_\Lambda J]] = \\- [(2T +
		2\lambda + \chi S) J_{\Lambda + \Gamma} J] - [J_\Gamma (2T +
		2 \lambda + \chi S) J] = \\ + (2\gamma + \chi (\eta + \chi))
		[J_{\Lambda + \Gamma} J] - (2 T + 2(\lambda + \gamma) + \chi (S +
		\eta)) [J_{\Gamma} J] = \\ - (2 \gamma + \chi (\eta + \chi)) \left(
		H + \frac{c}{3} (\lambda + \gamma)(\chi + \eta) \right) + (2T + 2
		(\lambda + \gamma) + \chi (S + \eta)) \left(H + \frac{c}{3} \gamma
		\eta
		\right) = \\ (2 T + 3 \lambda + \chi S) H + \frac{c}{3} \lambda^2
		\chi.
	\end{multline*}
\end{proof}
\renewcommand{\qedsymbol}{}
\end{proof}

\end{document}